\patchcmd\Gread@eps{\@inputcheck#1 }{\@inputcheck"#1"\relax}{}{}
\newcommand{\intav}[1]{\mathchoice {\mathop{\vrule width 6pt height 3 pt depth  -2.5pt
\kern -8pt \intop}\nolimits_{\kern -6pt#1}} {\mathop{\vrule width
5pt height 3  pt depth -2.6pt \kern -6pt \intop}\nolimits_{#1}}
{\mathop{\vrule width 5pt height 3 pt depth -2.6pt \kern -6pt
\intop}\nolimits_{#1}} {\mathop{\vrule width 5pt height 3 pt depth
-2.6pt \kern -6pt \intop}\nolimits_{#1}}}
\def\polhk#1{\setbox0=\hbox{#1}{\ooalign{\hidewidth\lower1.5ex\hbox{`}\hidewidth\crcr\unhbox0}}}
\def\XXint#1#2#3{{\setbox0=\hbox{$#1{#2#3}{\int}$ }
\vcenter{\hbox{$#2#3$ }}\kern-.6\wd0}}
\newcommand{\tr}{\operatorname{Tr}}
\newtheorem{Theorem}{Theorem}
\newtheorem{example}{Example}
\newtheorem{Definition}{Definition}
\newtheorem{Lemma}{Lemma}
\newtheorem{Proposition}{Proposition}
\newtheorem{Remark}{Remark}
\newtheorem{Assumption}{A}
\patchcmd{\env@cases}{1.2}{.8}{}{}
\begin{document}

\title{Existence of solutions to a fully nonlinear free transmission problem}
\author{Edgard A. Pimentel and Andrzej \'Swi\k{e}ch}

\date{\today} %%

\maketitle

\begin{abstract}

\noindent We study an equation governed by a discontinuous fully nonlinear operator. Such discontinuities are solution-dependent, which introduces a free boundary. Working under natural assumptions, we prove the existence of $L^p$-viscosity and strong solutions to the problem. The operator does not satisfy the usual structure conditions and to obtain the existence of solutions we resort to solving approximate problems, combined with a fixed-point argument. We believe our strategy can be applied to other classes of non-variational free boundary problems.

\medskip

\noindent \textbf{Keywords}:  Existence of solutions; $L^p$-viscosity solutions; strong solutions; fully nonlinear elliptic equations; free boundary problems.

\medskip 

\noindent \textbf{MSC(2020)}: 35D35; 35D40; 35B65; 35R35.
\end{abstract}

\vspace{.1in}

\section{Introduction}\label{sec_introduction}

We consider the problem 

\begin{equation}\label{eq_main}
	\begin{cases}
		F_1(D^2u)\chi_{\{u>0\}}+F_2(D^2u)\chi_{\{u<0\}}=f(x)&\hspace{.1in}\mbox{in}\hspace{.1in}\left(\Omega^+(u)\cup\Omega^-(u)\right)\cap \Omega\\
		u=g&\hspace{.1in}\mbox{on}\hspace{.1in}\partial\Omega,
	\end{cases}
\end{equation}
where $F_i:S(d)\to\mathbb{R}, i=1,2$ are $(\lambda,\Lambda)$-elliptic operators, $f\in L^p(\Omega)$ for some $p>p_0$, and $g\in C(\partial\Omega)$. Here, 
$\Omega^+(u):=\{x\in \Omega\mid u(x)>0\}$, and  $\Omega^-(u):=\{x\in \Omega\mid u(x)<0\}$, $S(d)\approx\mathbb{R}^\frac{d(d+1)}{2}$ is the space of $d\times d$ symmetric matrices and $\frac{d}{2}\leq p_0=p_0(\frac{\Lambda}{\lambda},d)<d$ is the exponent such that the Aleksandrov-Bakelman-Pucci type maximum principle holds for $(\lambda,\Lambda)$-elliptic equations with right hand side in $L^p$ for $p>p_0$, see
\cite{Fabes-Stroock1984,Escauriaza1993,Cabre1995,Fok1998,Crandall-Swiech2003}. 

The main result of the manuscript is the proof of the existence of $L^p$-viscosity solutions to \eqref{eq_main}, Theorem \ref{thm_existence1}. We also show that under an additional, rather natural, condition on the operators $F_1$ and $F_2$, the $L^p$-viscosity solutions belong to $W^{2,p}_{\rm{loc}}(\Omega)$ and are strong solutions to \eqref{eq_main}.

The problem in \eqref{eq_main} accounts for a discontinuous operator, whose discontinuities depend on the sign of the solutions. Within $\{u>0\}$, the equation is governed by $F_1$, whereas in $\{u<0\}$ it is driven by $F_2$. Due to this feature of the model, we localize it in the context of free transmission problems; see \cite{Amaral-Teixeira2015,HPRS2020}. The nomenclature follows from the fact that the interface -- across which discontinuities occur -- can be regarded as a free boundary. It is convenient to notice that such ingredient does not appear in the usual formulation of transmission problems; see \cite{Picone1954, Stampacchia1956, Borsuk2010}, to mention just a few. 

In that case, the transmission interface is known a priori. An important question arising in that setting concerns the regularity of solutions and the dependence of the associated estimates on the geometry of subdomains \cite{Li-Vogelius2000,Li-Nirenberg2003}. Indeed, it has become apparent that the geometry of the interface and the regularity of the solutions are close-knit properties, as recently indicated in \cite{CSCS2021}.

Interior regularity of $L^d$-strong solutions to \eqref{eq_main}, as well as a preliminary analysis of the free boundary, have been studied in \cite{Pimentel-Santos2020}. Nonetheless, the issue of existence is not discussed in that paper and it remained open so far.

A consequential aspect of \eqref{eq_main} concerns the dependence of the operator on the solutions. We observe the equation does not satisfy the usual structure conditions (e.g., \cite[Condition (SC)]{CCKS1996}) under which existence of $L^p$-viscosity solutions is well known \cite{CKLS1999, JS, S1997}. As a result, the existence of $L^p$-viscosity solutions to \eqref{eq_main} requires a different strategy, which is based on the approach employed in \cite{HPRS2020}. 

We start by considering regularized auxiliary problems. In contrast to \eqref{eq_main}, these problems are uniformly elliptic in the entire domain and do not depend on the solutions. Under a usual condition on the geometry of $\Omega$, and the uniform ellipticity assumption on the operators $F_1$ and $F_2$, we produce a family of solutions to such regularized auxiliary problem. The family of such problems depends on a small parameter $\varepsilon$; however, important information (e.g. estimates and moduli of continuity) are found to be uniform for the family. We construct $L^p$-viscosity solutions to the regularized problems by the Schauder Fixed Point Theorem and then  send $\epsilon\to 0$ to produce an $L^p$-viscosity solution to \eqref{eq_main}. If we also suppose $F_1$ and $F_2$ to be close, in a suitable sense, we derive $C^{1,\alpha}$ interior estimates for such solutions. Our first main result reads as follows.

\begin{Theorem}[Existence of viscosity solutions]\label{thm_existence1}
Let Assumptions \emph{(A\ref{assump_domain})}, \emph{(A\ref{assump_ellipticity})} hold, $g\in C(\partial\Omega)$ and $f\in L^p(\Omega)$, for some $p>p_0$.
 Then there exists an $L^p$-viscosity solution $u\in C(\overline\Omega)$ to \eqref{eq_main}. Suppose further that \emph{(A\ref{assump_comp})} also holds and $p>d$; let $\alpha\in(0,1)$ satisfy
 \[
 	\alpha<\alpha_0\hspace{.2in}\mbox{and}\hspace{.2in}\alpha\leq 1-\frac{d}{p},
 \]
 where $\alpha_0\in(0,1)$ corresponds to the $C^{1,\alpha_0}$-regularity available for the solutions to $G=0$ for any $(\lambda,\Lambda)$-elliptic operator $G$. Then there exists $\beta_0=\beta_0(d,p,\lambda,\Lambda,\alpha)>0$ such that if the parameter $\tau>0$ in \emph{(A\ref{assump_comp})} satisfies $\tau\leq \beta_0$, then $u\in C^{1,\alpha}_{\rm{loc}}(\Omega)$ and, for every 
 $\Omega'\Subset\Omega$, we have
 \begin{equation}\label{eq_c1aest}
\|u\|_{C^{1,\alpha}(\Omega')}\leq C\left(1+|F_1(0)|+|F_2(0)|+\|f\|_{L^p(\Omega)}
+\|g\|_{L^\infty(\partial\Omega)}\right),
\end{equation}
where $C=C(\alpha,d,p,\lambda,\Lambda,K,\tau,{\rm diam}(\Omega),{\rm dist}(\Omega',\partial\Omega))$.
\end{Theorem}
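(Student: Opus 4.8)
The plan is to prove existence first and then the regularity statement, treating them separately. For existence, I would fix a small parameter $\varepsilon>0$ and replace the discontinuous coefficients $\chi_{\{u>0\}}$ and $\chi_{\{u<0\}}$ by smooth, solution-independent weights: given a continuous function $v$, introduce cutoffs $\phi^+_\varepsilon(v), \phi^-_\varepsilon(v)\in[0,1]$ depending smoothly on $v$ with $\phi^+_\varepsilon(v)=1$ on $\{v\geq\varepsilon\}$, $\phi^+_\varepsilon(v)=0$ on $\{v\leq 0\}$, and symmetrically for $\phi^-_\varepsilon$, so that the regularized operator $G_\varepsilon[v](x,M):=\phi^+_\varepsilon(v(x))F_1(M)+\phi^-_\varepsilon(v(x))F_2(M)+(1-\phi^+_\varepsilon(v(x))-\phi^-_\varepsilon(v(x)))\mathcal{M}^+_{\lambda,\Lambda}(M)$ (or any fixed $(\lambda,\Lambda)$-elliptic interpolant that is continuous in $x$) is uniformly elliptic on all of $\Omega$. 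For each fixed $v\in C(\overline\Omega)$ one solves the standard Dirichlet problem $G_\varepsilon[v](x,D^2 u)=f$ in $\Omega$, $u=g$ on $\partial\Omega$, which has a unique $L^p$-viscosity solution by the theory cited in \cite{CKLS1999, JS, S1997} (the frozen equation does satisfy condition (SC)). This defines a map $T_\varepsilon: v\mapsto u$; ABP and the interior/boundary $C^\alpha$ estimates of \cite{CCKS1996,CKLS1999,Swiech1997} give a uniform bound $\|u\|_{C(\overline\Omega)}+[u]_{C^\alpha(\Omega')}\leq C$ independent of $v$ and of $\varepsilon$, with the boundary modulus of continuity controlled using Assumption (A\ref{assump_domain}) on $\partial\Omega$ by barrier arguments. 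Hence $T_\varepsilon$ maps a suitable compact convex subset of $C(\overline\Omega)$ into itself and is continuous (stability of $L^p$-viscosity solutions under uniform convergence of the coefficients, which hold here because $\phi^\pm_\varepsilon$ are continuous functions of $v$); the Schauder Fixed Point Theorem yields a fixed point $u_\varepsilon$, i.e. an $L^p$-viscosity solution of the regularized problem.

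Next I would let $\varepsilon\to 0$. The uniform bounds give, along a subsequence, $u_\varepsilon\to u$ uniformly on $\overline\Omega$ with $u=g$ on $\partial\Omega$. The passage to the limit in the equation is the delicate point: on the open set $\{u>0\}$ one has $u_\varepsilon>\varepsilon$ eventually on compact subsets, so $\phi^+_\varepsilon(u_\varepsilon)=1$ there and the stability theorem for $L^p$-viscosity solutions (\cite[Theorem 3.8]{CCKS1996}) gives $F_1(D^2u)=f$ in $\{u>0\}$ in the $L^p$-viscosity sense; symmetrically $F_2(D^2u)=f$ in $\{u<0\}$. This shows $u$ is an $L^p$-viscosity solution to \eqref{eq_main}. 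The main obstacle is ensuring that the limit behaves correctly near the free boundary $\partial\{u>0\}\cup\partial\{u<0\}$ — but since \eqref{eq_main} only requires the equation to hold on the open sets $\Omega^+(u)\cup\Omega^-(u)$, and the notion of $L^p$-viscosity solution is local, this is handled by the above compact-exhaustion argument and no equation is demanded on the interface itself.

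For the second part, assume (A\ref{assump_comp}) and $p>d$, and fix $\alpha$ as in the statement. The point is that $u$ solves, in the $L^p$-viscosity sense, an equation of the form $F(x,D^2u)=f$ where $F(x,\cdot)=F_1$ on $\Omega^+(u)$ and $F_2$ on $\Omega^-(u)$ and (since $D^2u=0$ a.e. on $\{u=0\}$ once we have $W^{2,p}$ regularity, or arguing directly from the definition) the oscillation of $F(x,\cdot)$ in $x$ is controlled by the closeness parameter $\tau$ from (A\ref{assump_comp}): $\beta(x,x_0):=\sup_{M\neq 0}|F(x,M)-F(x_0,M)|/\|M\|\leq \tau$ (or has small $L^d$ average). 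This is exactly the small-oscillation hypothesis in the Caffarelli–Escauriaza–Swiech $W^{2,p}$ and $C^{1,\alpha}$ theory for $L^p$-viscosity solutions (\cite{Caffarelli1989,Escauriaza1993,Swiech1997,Winter2009}). Thus one chooses $\beta_0=\beta_0(d,p,\lambda,\Lambda,\alpha)$ to be the smallness threshold in that theory; if $\tau\leq\beta_0$ then $u\in W^{2,p}_{\rm loc}(\Omega)\subset C^{1,\alpha}_{\rm loc}(\Omega)$ by Sobolev embedding, and the corresponding interior estimate gives \eqref{eq_c1aest}, with the right-hand side assembled from the ABP/$L^\infty$ bound on $u$ (contributing $|F_1(0)|+|F_2(0)|+\|f\|_{L^p(\Omega)}+\|g\|_{L^\infty(\partial\Omega)}$) and the interior estimate constant depending on the listed quantities. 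Here the subtlety — that $F(x,\cdot)$ depends on $u$ itself through the sets $\Omega^\pm(u)$ — is harmless for the a priori estimate because $u$ is already fixed; the closeness condition (A\ref{assump_comp}) is precisely what makes the resulting (merely measurable in $x$) operator fall within the reach of the regularity theory.
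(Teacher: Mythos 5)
Your existence argument (regularize the coefficient, freeze $v$, solve the uniformly elliptic Dirichlet problem, run Schauder's Fixed Point Theorem, and pass $\varepsilon\to 0$ using local uniform convergence of the coefficient on compact subsets of $\Omega^+(u)\cup\Omega^-(u)$ together with stability of $L^p$-viscosity solutions) is essentially the paper's proof of the first part of Theorem \ref{thm_existence1}; the cosmetic difference (a three-piece cutoff with a Pucci interpolant instead of the mollified convex combination $h^v_\varepsilon F_1+(1-h^v_\varepsilon)F_2$) is harmless.

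The regularity part, however, has a genuine gap. You propose to apply the small-oscillation regularity theory directly to the limit $u$, viewing it as an $L^p$-viscosity solution of $F(x,D^2u)=f$ with $F(x,\cdot)=F_1$ on $\Omega^+(u)$ and $F_2$ on $\Omega^-(u)$. But $u$ satisfies \emph{no} equation on $\{u=0\}$: that set may have nonempty interior (where $D^2u=0$ and the equation $F_i(0)=f(x)$ simply need not hold), and no equation is imposed across the free boundary either, so there is no globally defined operator on $\Omega$ whose $L^p$-viscosity solution $u$ is. Your parenthetical fix ("$D^2u=0$ a.e.\ on $\{u=0\}$ once we have $W^{2,p}$ regularity") is circular, since the regularity is exactly what is being proved. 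The paper avoids this by proving the $C^{1,\alpha}$ estimate at the level of the approximations: $u_\varepsilon$ solves $G^{u_\varepsilon}_\varepsilon(x,D^2u_\varepsilon)=f$ in \emph{all} of $\Omega$, assumption (A\ref{assump_comp}) gives the uniform oscillation bound $\beta^\varepsilon(x,x_0)\leq 2\tau$ for the rescaled operators, Caffarelli--Cabr\'e's Theorem 8.3 together with \cite[Theorem 2.1]{S1997} yields \eqref{eq_c1aest} for $u_\varepsilon$ with constants independent of $\varepsilon$, and only then does one pass to the limit. A second, related error: under (A\ref{assump_ellipticity})--(A\ref{assump_comp}) alone you cannot invoke the $W^{2,p}$ theory and Sobolev embedding, because interior $W^{2,p}$ estimates require $C^{1,1}$ estimates for the frozen constant-coefficient equations (a convexity-type hypothesis, which is precisely Assumption (A\ref{assump_convexity}) and Theorem \ref{thm_existence2}); the correct tool here is the $C^{1,\alpha}$ small-oscillation theory, which needs only $(\lambda,\Lambda)$-ellipticity of the frozen operators.
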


Once the existence of $L^p$-viscosity solutions has been established, a natural question concerns the existence of strong solutions to \eqref{eq_main}. Here, our strategy is to examine conditions on the operators $F_1$ and $F_2$ leading to $W^{2,p}$ interior regularity estimates. The almost-convexity of $F_1$ and $F_2$ allows to prove such estimates. To be precise, we suppose that $F_1$ and $F_2$ are locally close to a convex, $(\lambda,\Lambda)$-elliptic operator $F$. 

This additional condition is transmitted through the structure of our proofs; as a consequence, the $L^p$-viscosity solution found in Theorem \ref{thm_existence1} is in $W^{2,p}_{\rm{loc}}(\Omega)$, with the usual estimates. The closeness regime imposed on $F_1$ and $F_2$ is encoded by constants $L$ and $\sigma$; see Assumption (A\ref{assump_convexity}). The existence of strong solutions to \eqref{eq_main} is the content of our second main result.

\begin{Theorem}[Existence of strong solutions]\label{thm_existence2}
Let Assumptions \emph{(A\ref{assump_domain})}, \emph{(A\ref{assump_ellipticity})}, \emph{(A\ref{assump_convexity})} hold, $g\in C(\partial\Omega)$ and $f\in L^p(\Omega)$, for some $p>p_0$. There exists $\beta_0=\beta_0(d,p,\lambda,\Lambda)>0$ such that if the parameter $\sigma>0$ in \emph{(A\ref{assump_convexity})} satisfies $\sigma\leq \beta_0$, then equation \eqref{eq_main} has a strong solution $u\in W^{2,p}_{\rm{loc}}(\Omega)\cap C(\overline\Omega)$. In addition, for every open subset $\Omega'\Subset \Omega$
\[
\|u\|_{W^{2,p}(\Omega')}\leq C\left(1+|F_1(0)|+|F_2(0)|+\|f\|_{L^p(\Omega)}
+\|g\|_{L^\infty(\partial\Omega)}\right),
\]
where $C=C(d,p,\lambda,\Lambda,L,\sigma,{\rm diam}(\Omega),{\rm dist}(\Omega',\partial\Omega))$.
\end{Theorem}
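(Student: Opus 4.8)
The plan is to build on Theorem \ref{thm_existence1} and upgrade the $L^p$-viscosity solution to $W^{2,p}_{\rm loc}$ regularity, using the almost-convexity hypothesis (A\ref{assump_convexity}). First I would observe that Assumption (A\ref{assump_convexity}) — closeness of $F_1$ and $F_2$ to a common convex $(\lambda,\Lambda)$-elliptic operator $F$ — implies in particular the closeness hypothesis (A\ref{assump_comp}) with a comparable parameter; thus, after possibly shrinking $\sigma\le\beta_0$, Theorem \ref{thm_existence1} already furnishes an $L^p$-viscosity solution $u\in C(\overline\Omega)\cap C^{1,\alpha}_{\rm loc}(\Omega)$ satisfying the estimate \eqref{eq_c1aest}. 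The remaining task is the second-order estimate. The key point is that, wherever $u\ne 0$, the equation reads $F_i(D^2u)=f$ in the $L^p$-viscosity sense with $F_i$ locally $\sigma$-close to the convex operator $F$; and on the (measure-theoretically relevant part of the) set $\{u=0\}$, since $u\in W^{1,1}_{\rm loc}$ one has $D^2u=0$ a.e., so $u$ is an $L^p$-viscosity solution (indeed subsolution and supersolution) of an equation that, near any fixed interior point, is a $\sigma$-perturbation of the fixed convex equation $F(D^2w)=\tilde f$ with $\tilde f:=f\chi_{\{u\neq0\}}\in L^p$. This brings us into the scope of the Caffarelli–Escauriaza $W^{2,p}$ theory for $L^p$-viscosity solutions of equations close to convex ones (as in \cite{CCKS1996,CKLS1999,Escauriaza1993,Winter2009}): there is $\beta_0=\beta_0(d,p,\lambda,\Lambda)$ such that if the oscillation/closeness parameter is below $\beta_0$, then $u\in W^{2,p}_{\rm loc}(\Omega)$ with the interior estimate
\[
\|u\|_{W^{2,p}(\Omega')}\le C\bigl(\|u\|_{L^\infty(\Omega)}+\|\tilde f\|_{L^p(\Omega)}+|F(0)|\bigr).
\]

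The second step is bookkeeping the constants. Using $\|\tilde f\|_{L^p}\le\|f\|_{L^p}$, $|F(0)|\le|F_1(0)|+\sigma$, and the sup-norm bound on $u$ coming from the ABP maximum principle applied on $\Omega$ (which gives $\|u\|_{L^\infty(\Omega)}\le C(\|g\|_{L^\infty(\partial\Omega)}+\|f\|_{L^p(\Omega)}+|F_1(0)|+|F_2(0)|)$, exactly as already recorded in the proof of Theorem \ref{thm_existence1}), one obtains the stated estimate with $C=C(d,p,\lambda,\Lambda,L,\sigma,\mathrm{diam}(\Omega),\mathrm{dist}(\Omega',\partial\Omega))$. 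Finally, once $u\in W^{2,p}_{\rm loc}(\Omega)$ with $p>p_0\ge d/2$ — in fact here $p>p_0$ suffices for the strong-solution notion, and when $p>d$ one also has $u\in C^{1,\alpha}_{\rm loc}$ — a standard equivalence lemma (see \cite{CCKS1996}) shows that the $L^p$-viscosity (sub/super)solution property is equivalent to the pointwise-a.e.\ inequalities, so $F_1(D^2u)\chi_{\{u>0\}}+F_2(D^2u)\chi_{\{u<0\}}=f$ holds a.e.\ in $\Omega$; that is, $u$ is a strong solution of \eqref{eq_main}. The boundary condition $u=g$ on $\partial\Omega$ is inherited from Theorem \ref{thm_existence1}.

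The main obstacle is the interplay between the free interface $\{u=0\}$ and the $W^{2,p}$ machinery: one must make sure that the discontinuous operator, restricted along the solution $u$, is genuinely an $L^p$-viscosity solution of a single equation that is a small perturbation of a fixed convex equation. The delicate points are (i) that on $\{u=0\}$ the set $\{u>0\}$ and $\{u<0\}$ contribute nothing — formally $D^2u=0$ a.e.\ on $\{u=0\}$ by the Sobolev/Calderón property, but at the viscosity level one argues that $u$ is simultaneously a viscosity sub- and supersolution of $\mathcal{P}^+_{\lambda,\Lambda}(D^2u)\ge -|f|$ type bounds there, hence pinned between two Pucci extremal equations close to $F$; and (ii) that the closeness constant feeding into the Caffarelli-type estimate is controlled by $\sigma$ uniformly, independently of which branch $F_i$ is active. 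Handling (i) is exactly the mechanism by which, in the approximating problems of Theorem \ref{thm_existence1}, the regularized operator was arranged to interpolate through $F$; so the cleanest route is to carry the $W^{2,p}$ estimate at the level of the $\varepsilon$-regularized problems (where the operator is a genuine single uniformly elliptic operator $\sigma$-close to $F$, with right-hand side in $L^p$), obtain a uniform-in-$\varepsilon$ $W^{2,p}_{\rm loc}$ bound via the Caffarelli–Escauriaza–Winter estimate, and then pass to the limit $\varepsilon\to 0$ using weak compactness in $W^{2,p}_{\rm loc}$ together with the stability of $L^p$-viscosity solutions; the limit inherits both the $W^{2,p}$ bound (by weak lower semicontinuity of the norm) and, being $W^{2,p}_{\rm loc}$, the pointwise equation.
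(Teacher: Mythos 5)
Your final plan --- prove a uniform-in-$\varepsilon$ interior $W^{2,p}$ estimate for the $\varepsilon$-regularized problems (where $G_\varepsilon^{u_\varepsilon}$ is a single uniformly elliptic operator that is $(L,\sigma)$-close to the convex operator $F$, cf. \eqref{eq:a2}), invoke the Caffarelli--Escauriaza $W^{2,p}$ theory for operators close to convex ones, pass to the limit $\varepsilon\to0$, and conclude the strong-solution property from the $W^{2,p}_{\rm loc}$/$L^p$-viscosity equivalence in \cite{CCKS1996} --- is exactly the paper's proof (Proposition \ref{prop_reg2} followed by the short proof of Theorem \ref{thm_existence2}), so in substance your proposal is correct and takes the same route.

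Two caveats on the first half of your write-up, which you yourself effectively retract. First, the direct argument on $u$ is circular: the claim that $D^2u=0$ a.e.\ on $\{u=0\}$ requires $u\in W^{2,1}_{\rm loc}$, which is precisely what is to be proved, and the assertion that $u$ solves ``a single equation that is a $\sigma$-perturbation of $F(D^2w)=\tilde f$'' near interface points is not justified at the $L^p$-viscosity level; this is why the paper (and your fallback) runs the $W^{2,p}$ machinery on the regularized problems, where Lemma \ref{lem_gepv} gives the closeness to $F$ uniformly in $x$, $\varepsilon$ and in the active branch. Second, you cannot invoke the $C^{1,\alpha}_{\rm loc}$ conclusion of Theorem \ref{thm_existence1} here, since Theorem \ref{thm_existence2} only assumes $p>p_0$ rather than $p>d$; but this is harmless, because only the existence part of Theorem \ref{thm_existence1} (and the $L^\infty$/ABP bound) is needed, and indeed neither the paper nor your final argument uses the $C^{1,\alpha}$ estimate.
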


Our strategy bypasses the dependence of the operator on the solutions through a fixed-point argument. We believe this approach may be useful for a larger class of free boundary problems of non-variational nature.

The remainder of this paper is organized as follows: in Section \ref{sec_prelim} we detail our main assumptions and gather preliminary notions and facts used in the paper. Section \ref{sec_proofthm1} presents the proof of Theorem \ref{thm_existence1}. Finally, in Section \ref{sec_proofthm2} we put forward the proof of Theorem \ref{thm_existence2}.

\section{Preliminaries}\label{sec_prelim}

In the sequel we detail the main assumptions under which we work and present preliminary notions used in our arguments. We start  by recalling a definition.

\begin{Definition}[Uniform exterior cone condition]\label{def_cone}
Let $\Omega\subset\mathbb{R}^d$ be open and bounded. We say $\Omega$ satisfies a uniform exterior cone condition if there exist $r,\theta>0$ such that, for every $x\in\partial\Omega$, one can find a cone $C$ of opening $\theta$ and center at the origin, satisfying
\[
	(x+C)\cap B_r(x)\subset\mathbb{R}^d\setminus\Omega.
\]
\end{Definition}

Our arguments require the domain $\Omega$ to satisfy a uniform exterior cone condition. This is stated in the following assumption.

\begin{Assumption}[Regularity of $\Omega$]\label{assump_domain}
The set $\Omega\subset\mathbb{R}^d$ is a bounded domain satisfying a uniform exterior cone condition. 
\end{Assumption}

In addition to the geometry of the domain, we also impose a uniform ellipticity condition on the operators $F_1$ and $F_2$. 

\begin{Assumption}[Uniform ellipticity]\label{assump_ellipticity}
The operators $F_i:S(d)\to\mathbb{R}$ are $(\lambda,\Lambda)$-elliptic, for $i=1,2$. That is, for every $M,P\in S(d)$, with $P\geq 0$, we have
\[
	F_i(M)-\Lambda\tr(P)\leq F_i(M+P)\leq F_i(M)-\lambda\tr(P),
\]
for $i=1,2$. 
\end{Assumption}

It is useful to write (A\ref{assump_ellipticity}) in terms of the extremal operators $\mathcal{P}_{\lambda,\Lambda}^\pm$, defined as
\[
	\mathcal{P}_{\lambda,\Lambda}^+(M):=-\lambda\tr(M^+)+\Lambda\tr(M^-)
\]
and
\[
	\mathcal{P}_{\lambda,\Lambda}^-(M):=-\Lambda\tr(M^+)+\lambda\tr(M^-).
\]
Since the ellipticity constants are fixed throughout the paper, we drop the subscripts and write $\mathcal{P}^\pm_{\lambda,\Lambda}=\mathcal{P}^\pm$. The condition in (A\ref{assump_ellipticity}) then becomes
\[
	\mathcal{P}^-(M-N)\leq F_i(M)-F_i(N)\leq\mathcal{P}^+(M-N),
\]
for every $M,N\in S(d)$ and $i=1,2$.

To prove the existence of $L^p$-viscosity solutions to the Dirichlet problem \eqref{eq_main}, we only require Assumptions (A\ref{assump_domain}) and (A\ref{assump_ellipticity}). However, further regularity of these solutions depends on additional conditions on the operators $F_1$ and $F_2$. We proceed with an assumption on the proximity of those operators.

\begin{Assumption}[Closeness of operators]\label{assump_comp}
There exist constants $K,\tau>0$ such that
\begin{equation}\label{assump_localbehavior}
	\left|F_1(M)-F_2(M)\right|\leq K+\tau\|M\|,\quad \forall M\in S(d).
\end{equation}
\end{Assumption}

The constant $\tau$ in (A\ref{assump_comp}), as well as the constant $\sigma$ in Assumption (A\ref{assump_convexity}), will be required to be sufficiently small later in the paper; see Sections \ref{sec_proofthm1} and \ref{sec_proofthm2}.
We emphasize that existence of $L^p$-viscosity solutions to \eqref{eq_main} does not require the proximity regime in (A\ref{assump_comp}); we only resort to this condition to prove an estimate in H\"older spaces. The inequality in (A\ref{assump_comp}) is satisfied, for example, by linear operators in the non-divergence form governed by matrices that are close in some suitable topology. A fully nonlinear example can be found in the context of Isaacs equations. 

\begin{example}[Isaacs equations]\label{eq_exbell} 
Let $A_{\alpha,\beta},B_{\alpha,\beta}\in S(d)$ for all $\alpha\in\mathcal{A}, \beta\in \mathcal{B}$, where $\mathcal{A},\mathcal{B}$ are some sets. Suppose that there exist constants $0<\lambda\leq \Lambda$ such that
\[
	\lambda I\leq A_{\alpha,\beta},B_{\alpha,\beta}\leq \Lambda I,
\]
for all $(\alpha,\beta)\in (\mathcal{A},\mathcal{B})$, where $I$ is the identity matrix. Suppose further that there exists $\tau>0$ such that
\[
	\left|A_{\alpha,\beta}-B_{\alpha,\beta}\right|\leq \tau.
\]
Then the Isaacs operators 
\[
	\inf_{\alpha\in\mathcal{A}}\sup_{\beta\in\mathcal{B}}\left(-\tr\left(A_{\alpha,\beta}M\right)\right)\hspace{.2in}\mbox{and}\hspace{.2in}\inf_{\alpha\in\mathcal{A}}\sup_{\beta\in\mathcal{B}}\left(-\tr\left(B_\beta M\right)\right)
\]
satisfy \emph{(A\ref{assump_comp})} with $K=0$.
\end{example}

To prove the existence of strong solutions we impose an additional, natural, condition on the operators $F_1$ and $F_2$. We require them to be locally close to a convex operator.

\begin{Assumption}[Near convexity condition]\label{assump_convexity}
There exist a convex, $(\lambda,\Lambda)$-elliptic operator $F=F(M)$ and constants $L,\sigma>0$ such that
\begin{equation}\label{assump_localbehavior}
	\left|F_i(M)-F(M)\right|\leq L+\sigma\|M\|,\quad i=1,2, \forall M\in S(d).
\end{equation}
\end{Assumption}
To illustrate the requirement in Assumption (A\ref{assump_convexity}) we again discuss the example of Isaacs operators. 

\begin{example}\label{eq_isaacs}
Let $A_{\alpha,\beta}$ be as in Example \ref{eq_exbell}. Suppose there exist $\sigma>0$ and
matrices $A_\beta\in S(d)$ such that $\lambda I\leq A_\beta\leq \Lambda I$ and
\[
	\sup_{x\in\Omega}\left|A_{\alpha,\beta}-A_\beta\right|\leq \sigma,\quad\forall\alpha\in\mathcal{A},\beta\in\mathcal{B}.
\]
Then
\[
	\left|\inf_{\alpha\in\mathcal{A}}\sup_{\beta\in\mathcal{B}}\left(-\tr\left(A_{\alpha,\beta}M\right)\right)-\sup_{\beta\in\mathcal{B}}\left(-\tr\left(A_{\beta}M\right)\right)\right|\leq \sigma\|M\|.
\]
Thus the Isaacs operators satisfy \emph{(A\ref{assump_convexity})} with $L=0$, when compared to the Bellman operator $F(M)=\sup_{\beta\in\mathcal{B}}\left(-\tr\left(A_{\beta}M\right)\right)$, provided the associated matrices are close enough.
\end{example}

We continue by introducing auxiliary operators. Consider $v\in C(\overline \Omega)$ such that $v=g$ on $\partial\Omega$ and fix arbitrary $\varepsilon>0$. Let $h_\varepsilon^v\in C(\overline\Omega)$ be defined by
\[
	h_\varepsilon^v=g_\varepsilon^v*\eta_\varepsilon,
\]
where 
\[
g_\varepsilon^v=\max\left(\min\left(\frac{v+\varepsilon}{2\varepsilon},1\right),0\right) \quad\mbox{on}\,\,\Omega,
\]
$g_\varepsilon^v=0$ on $\mathbb{R}^d\setminus\Omega$, and $\eta_\varepsilon$ is the standard mollifier function.
Define the operator $G_\varepsilon^v:\Omega\times S(d)\to\mathbb{R}$ by
\begin{equation}\label{eq_eq1}
	G_\varepsilon^v(x,M):=h_\varepsilon^v(x)F_1(M)+(1-h_\varepsilon^v(x))F_2(M).
\end{equation}

The next lemma establishes important properties of the operator $G_\varepsilon^v$ and closes this section.

\begin{Lemma}\label{lem_gepv}
Let Assumption \emph{(A\ref{assump_ellipticity})} hold and $G_\varepsilon^v$ be defined by \eqref{eq_eq1}. Then $G_\varepsilon^v$ is a $(\lambda,\Lambda)$-elliptic operator and there exists a constant $K_\varepsilon^v>0$ such that
\begin{equation}\label{eq:a1}
	\left|G_\varepsilon^v(x,M)-G_\varepsilon^v(y,M)\right|\leq K_\varepsilon^v\left|x-y\right|(1+\|M\|),\quad\forall x,y\in\Omega,M\in S(d).
\end{equation}
If in addition \emph{(A\ref{assump_comp})} holds then
\begin{equation}\label{eq:a21}
	\left|G_\varepsilon^v(x,M)-G_\varepsilon^v(y,M)\right|\leq 2(K+\tau\|M\|),\quad\forall x,y\in\Omega,M\in S(d)
\end{equation}
and if \emph{(A\ref{assump_convexity})} holds, then 
\begin{equation}\label{eq:a2}
	\left|G_\varepsilon^v(x,M)-F(M)\right|\leq L+\sigma\|M\|,\quad\forall x\in\Omega,M\in S(d).
\end{equation}
\end{Lemma}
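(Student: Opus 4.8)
The plan is to verify the three claimed bounds in turn, each time exploiting the fact that $G_\varepsilon^v(x,M)$ is a convex combination of $F_1(M)$ and $F_2(M)$ with coefficients $h_\varepsilon^v(x)\in[0,1]$ and $1-h_\varepsilon^v(x)$. Indeed, by construction $g_\varepsilon^v$ takes values in $[0,1]$, and since $\eta_\varepsilon$ is a standard mollifier with $\int\eta_\varepsilon=1$ and $\eta_\varepsilon\geq 0$, the convolution $h_\varepsilon^v=g_\varepsilon^v*\eta_\varepsilon$ also takes values in $[0,1]$; moreover $h_\varepsilon^v$ is smooth, hence Lipschitz on $\overline\Omega$, with some Lipschitz constant which I will call $K_\varepsilon^v$ (it depends on $\varepsilon$, on $v$, and on the mollifier, through $\|\nabla\eta_\varepsilon\|_{L^1}$ and the $\sup$ of $g_\varepsilon^v$; explicitly one gets a bound like $C/\varepsilon^{2}$ but the precise value is irrelevant).

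For ellipticity, I would write, for $M,N\in S(d)$,
\[
G_\varepsilon^v(x,M)-G_\varepsilon^v(x,N)=h_\varepsilon^v(x)\bigl(F_1(M)-F_1(N)\bigr)+(1-h_\varepsilon^v(x))\bigl(F_2(M)-F_2(N)\bigr),
\]
and then use (A\ref{assump_ellipticity}) in its Pucci form $\mathcal{P}^-(M-N)\leq F_i(M)-F_i(N)\leq\mathcal{P}^+(M-N)$ together with $h_\varepsilon^v(x),1-h_\varepsilon^v(x)\geq 0$ and $h_\varepsilon^v(x)+(1-h_\varepsilon^v(x))=1$ to conclude $\mathcal{P}^-(M-N)\leq G_\varepsilon^v(x,M)-G_\varepsilon^v(x,N)\leq\mathcal{P}^+(M-N)$, i.e. $G_\varepsilon^v(x,\cdot)$ is $(\lambda,\Lambda)$-elliptic for each fixed $x$. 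For \eqref{eq:a1}, I subtract the expressions at $x$ and $y$: the $F_1$ and $F_2$ terms at fixed $M$ are constants, so
\[
G_\varepsilon^v(x,M)-G_\varepsilon^v(y,M)=\bigl(h_\varepsilon^v(x)-h_\varepsilon^v(y)\bigr)\bigl(F_1(M)-F_2(M)\bigr),
\]
whence $|G_\varepsilon^v(x,M)-G_\varepsilon^v(y,M)|\leq K_\varepsilon^v|x-y|\,|F_1(M)-F_2(M)|$. To finish I bound $|F_1(M)-F_2(M)|$: from ellipticity, $|F_i(M)-F_i(0)|\leq\max(\mathcal{P}^+(M),-\mathcal{P}^-(M))\leq\Lambda\,\tr(|M|)\leq C(d,\Lambda)\|M\|$ (with an adjustment of constants between $\tr|M|$ and the operator norm $\|M\|$), so $|F_1(M)-F_2(M)|\leq |F_1(0)|+|F_2(0)|+C\|M\|\leq C'(1+\|M\|)$, absorbing the fixed numbers $|F_1(0)|,|F_2(0)|$ into the constant, which gives \eqref{eq:a1} after renaming $K_\varepsilon^v$.

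The last two bounds are essentially immediate from the same convex-combination identity. For \eqref{eq:a21}, using the displayed identity for $G_\varepsilon^v(x,M)-G_\varepsilon^v(y,M)$ and $|h_\varepsilon^v(x)-h_\varepsilon^v(y)|\leq 1$ gives $|G_\varepsilon^v(x,M)-G_\varepsilon^v(y,M)|\leq|F_1(M)-F_2(M)|\leq K+\tau\|M\|$ directly from (A\ref{assump_comp}) — in fact this is even sharper than the claimed constant $2(K+\tau\|M\|)$, so the stated inequality follows a fortiori. For \eqref{eq:a2}, I write
\[
G_\varepsilon^v(x,M)-F(M)=h_\varepsilon^v(x)\bigl(F_1(M)-F(M)\bigr)+(1-h_\varepsilon^v(x))\bigl(F_2(M)-F(M)\bigr),
\]
take absolute values, use $h_\varepsilon^v(x),1-h_\varepsilon^v(x)\in[0,1]$ and the triangle inequality, and apply (A\ref{assump_convexity}) to each of $|F_1(M)-F(M)|$ and $|F_2(M)-F(M)|$; since the coefficients sum to one, $|G_\varepsilon^v(x,M)-F(M)|\leq L+\sigma\|M\|$. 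There is no real obstacle here — the only mildly delicate point is verifying that the mollification preserves the range $[0,1]$ of $g_\varepsilon^v$ and that $h_\varepsilon^v$ is Lipschitz with a (possibly $\varepsilon$-dependent) constant, which is standard; everything else reduces to the convexity of the coefficient weights and the triangle inequality.
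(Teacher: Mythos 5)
Your proof is correct and follows essentially the same route as the paper: $(\lambda,\Lambda)$-ellipticity from the convex-combination structure of $G_\varepsilon^v$, the bound \eqref{eq:a1} from the Lipschitz continuity of the mollified weight $h_\varepsilon^v$ together with the linear growth $|F_i(M)|\leq |F_i(0)|+C\|M\|$ forced by ellipticity, and the remaining two estimates from the triangle inequality with weights $h_\varepsilon^v$, $1-h_\varepsilon^v$ summing to one. The only (harmless) deviation is in \eqref{eq:a21}, where your identity $G_\varepsilon^v(x,M)-G_\varepsilon^v(y,M)=\bigl(h_\varepsilon^v(x)-h_\varepsilon^v(y)\bigr)\bigl(F_1(M)-F_2(M)\bigr)$ yields the sharper bound $K+\tau\|M\|$, while the paper inserts $F_1(M)$ and uses the triangle inequality, obtaining the factor $2$.
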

\begin{proof}
We start by verifying the first assertion in the lemma. Note that
\[
	\begin{split}
		G_\varepsilon^v(x,M+P)&=h_\varepsilon^v(x)F_1(M+P)+(1-h_\varepsilon^v(x))F_2(M+P)\\
			&\leq h_\varepsilon^v(x)\left(F_1(M)-\lambda\tr(P)\right)+(1-h_\varepsilon^v(x))\left(F_2(M)-\lambda\tr(P)\right)\\
			&= h_\varepsilon^v(x)F_1(M)+(1-h_\varepsilon^v(x))F_2(M)-\lambda\tr(P)\\
			&=G_\varepsilon^v(x,M)-\lambda\tr(P).
	\end{split}
\]
The remaining inequality follows from an entirely analogous argument. For the second claim, we observe that
\begin{equation}\label{eq_comp1}
	\begin{split}
		\left|G_\varepsilon^v(x,M)-G_\varepsilon^v(y,M)\right|
			&\leq |h_\varepsilon^v(x)-h_\varepsilon^v(y)|(|F_1(M)|+|F_2(M)|)\\
			&\leq K_\varepsilon^v\left|x-y\right|(1+\|M\|).
	\end{split}
\end{equation}
If (A\ref{assump_convexity}) holds, since for every $x\in\Omega$ and  $M\in S(d)$, $F(M)=h_\varepsilon^v(x)F(M)+(1-h_\varepsilon^v(x))F(M)$, we have
\[
\begin{split}
\left|G_\varepsilon^v(x,M)-F(M)\right|&\leq h_\varepsilon^v(x)|F_1(M)-F(M)|+(1-h_\varepsilon^v(x))|F_2(M)-F(M)|
\\
&
\leq L+\sigma\|M\|.
\end{split}
\]
Finally, if (A\ref{assump_comp}) holds, then
\[
\begin{split}
\left|G_\varepsilon^v(x,M)-G_\varepsilon^v(y,M)\right|&\leq \left|G_\varepsilon^v(x,M)-F_1(M)\right|+\left|F_1(M)-G_\varepsilon^v(y,M)\right|
\\
&
\leq 2(K+\tau\|M\|).
\end{split}
\]
\end{proof}

\section{Existence of $L^p$-viscosity solutions}\label{sec_proofthm1}

In this section we present the proof of Theorem \ref{thm_existence1}. We consider, for $\varepsilon>0$, the regularized problems 
\begin{equation}\label{eq_dir1eps}
	\begin{cases}
		G_\varepsilon^u(x,D^2u)=f&\hspace{.4in}\mbox{in}\hspace{.1in}\Omega\\
		u=g&\hspace{.4in}\mbox{on}\hspace{.1in}\partial\Omega,
	\end{cases}
\end{equation}
were $G_\varepsilon^u$ is defined as in \eqref{eq_eq1}. 

To obtain $L^p$-viscosity solutions to \eqref{eq_dir1eps}, we consider auxiliary problems
\begin{equation}\label{eq_dir1}
	\begin{cases}
		G_\varepsilon^v(x,D^2u)=f&\hspace{.4in}\mbox{in}\hspace{.1in}\Omega\\
		u=g&\hspace{.4in}\mbox{on}\hspace{.1in}\partial\Omega,
	\end{cases}
\end{equation}
for $v\in C(\overline \Omega)$. We first prove the existence of global sub and supersolutions $\underline u$ and $\overline u$ to \eqref{eq_dir1}, independent of $v\in C(\overline \Omega)$ such that $v=g$ on $\partial\Omega$ and $\varepsilon>0$. Because $f$ is not necessarily continuous, we do not construct explicit barriers. Our strategy relies on the ellipticity of $G_\varepsilon^v$, combined with the existence of strong solutions to the Dirichlet problems governed by the extremal operators. 
We then show the existence of a unique $L^p$-viscosity solution $\underline u\leq u_\varepsilon^v\leq \overline u$. Moreover, the solutions $u_\varepsilon^v$ are bounded in some $C^\alpha_{\rm{loc}}$-space, uniformly in $v\in C(\overline \Omega)$ such that $v=g$ on $\partial\Omega$ and $\varepsilon>0$. 

To complete the proof, we introduce two objects. First, consider the set $B\subset C(\overline{\Omega})$, given by
\begin{equation}\label{eq_setB}
	B:=\left\lbrace v\in C(\overline{\Omega})\;|\;\underline{u}\leq v\leq\overline{u} \right\rbrace.
\end{equation}
Then we define the operator $T$ on $B$ as follows. Given $v\in B$, we consider the (unique) $L^p$-viscosity solution  $u^v_\varepsilon$ to \eqref{eq_dir1} and set 
\begin{equation}\label{eq_defmapT}
	Tv:=u^v_\varepsilon. 
\end{equation}	
For every $\varepsilon>0$, we prove the existence of a fixed point for the map $T$. This gives an $L^p$-viscosity solution to \eqref{eq_dir1eps}. By letting $\varepsilon\to 0$, we then obtain an $L^p$-viscosity solution to \eqref{eq_main}. We continue with the existence of sub and supersolutions to \eqref{eq_dir1}.

%As a consequence, we can take the limit as $\varepsilon\to 0$ along a subsequence of $u_\varepsilon^v$ to obtain an $L^p$-viscosity solution to another auxiliary equation \eqref{eq:b7}. An iterative argument then completes the proof.

%REWRITE THE PART FROM HERE TO LEMMA \eqref{lem:barriers}. In what follows we put forward the proof of Theorem \ref{thm_existence1}. It relies on three main steps. First, we ensure the existence of viscosity solutions to a family of auxiliary problems, depending on the function $v\in C(B_1)$ and the parameter $\varepsilon>0$. Standard regularity results allow us to consider the limit as $\varepsilon\to 0$. As a consequence, we obtain the existence of viscosity solutions for a problem depending on $v$. Finally, an iterative strategy builds upon the regularity of the solutions to complete the proof. 

%Under the conclusions of Lemma \ref{lem_gepv}, a comparison principle is available for sub and supersolutions to 
%\[
%	G_\varepsilon^v(x,D^2w)=f
%\]
%in $\Omega$; see, for instance, \cite[Theorem III.1, item (1)]{Ishii-Lions1990}. The comparison principle builds upon the existence of suitable barriers and Perron's method to ensure the existence of a viscosity solution to
%\begin{equation}\label{eq_dir1}
%	\begin{cases}
%		G_\varepsilon^v(x,D^2w)=f&\hspace{.4in}\mbox{in}\hspace{.1in}\Omega\\
%		u=g&\hspace{.4in}\mbox{on}\hspace{.1in}\partial\Omega,
%	\end{cases}
%\end{equation}
%where $f\in L^d(\Omega)$ and $g\in C(\partial\Omega)$. In fact, we have the following proposition.

\begin{Lemma}[Existence of sub and supersolutions]\label{lem:barriers}
Suppose Assumptions \emph{(A\ref{assump_domain})}, \emph{(A\ref{assump_ellipticity})} hold, $g\in C(\partial\Omega)$ and $f\in L^p(\Omega)$, for some $p>p_0$. Let $v\in C(\overline \Omega)$ be such that $v=g$ on $\partial\Omega$ and let $\varepsilon>0$. Let $G_\varepsilon^v$ be defined as in \eqref{eq_eq1}. Then there exist a strong \emph{(}and $L^p$-viscosity\emph{)} subsolution 
$\underline u\in W^{2,p}_{\rm{loc}}(\Omega)\cap C(\overline\Omega)$ of \eqref{eq_dir1} and
 a strong \emph{(}and $L^p$-viscosity\emph{)} supersolution $\overline u\in W^{2,p}_{\rm{loc}}(\Omega)\cap C(\overline\Omega)$ of \eqref{eq_dir1} such that $\underline u=
 \overline u=g$ on $\partial\Omega$. The functions $\underline u$ and $\overline u$ are independent of $v$ and $\varepsilon$.
\end{Lemma}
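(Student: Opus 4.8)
The plan is to build $\overline u$ and $\underline u$ as solutions of the Dirichlet problems for the Pucci extremal operators, and then use the ellipticity sandwich from Assumption (A\ref{assump_ellipticity}) (equivalently, Lemma \ref{lem_gepv}) to check that they are sub/supersolutions of \eqref{eq_dir1} for \emph{every} choice of $v$ and $\varepsilon$. Concretely, I would first record that, by Lemma \ref{lem_gepv}, $G_\varepsilon^v(x,M)$ is $(\lambda,\Lambda)$-elliptic in $M$ and, writing $c_\varepsilon^v(x):=h_\varepsilon^v(x)F_1(0)+(1-h_\varepsilon^v(x))F_2(0)$, the ellipticity inequalities applied with $N=0$ give
\[
\mathcal{P}^-(M)+c_\varepsilon^v(x)\;\le\; G_\varepsilon^v(x,M)\;\le\;\mathcal{P}^+(M)+c_\varepsilon^v(x),\qquad\forall x\in\Omega,\ M\in S(d),
\]
with $|c_\varepsilon^v(x)|\le |F_1(0)|+|F_2(0)|=:C_0$, a bound independent of $v$ and $\varepsilon$.

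Next I would invoke the existence of $W^{2,p}_{\rm loc}(\Omega)\cap C(\overline\Omega)$ strong solutions to the Dirichlet problems for the extremal operators with $L^p$ right-hand side and continuous boundary data on a domain satisfying the uniform exterior cone condition — this is exactly where Assumption (A\ref{assump_domain}) enters, ensuring the appropriate barriers at $\partial\Omega$ and hence continuity up to the boundary (see Caffarelli--Crandall--Koçan--\'Swiech, and Winter for the $W^{2,p}$ theory; boundary regularity under the exterior cone condition is classical, cf. Miller/Gilbarg--Trudinger). Define $\overline u\in W^{2,p}_{\rm loc}(\Omega)\cap C(\overline\Omega)$ as the (strong, hence $L^p$-viscosity) solution of
\[
\mathcal{P}^-(D^2\overline u)=f-C_0\quad\text{in }\Omega,\qquad \overline u=g\quad\text{on }\partial\Omega,
\]
and $\underline u\in W^{2,p}_{\rm loc}(\Omega)\cap C(\overline\Omega)$ as the solution of
\[
\mathcal{P}^+(D^2\underline u)=f+C_0\quad\text{in }\Omega,\qquad \underline u=g\quad\text{on }\partial\Omega.
\]
These are manifestly independent of $v$ and $\varepsilon$. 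Then for a.e.\ $x\in\Omega$, using the sandwich above,
\[
G_\varepsilon^v(x,D^2\overline u(x))\;\ge\;\mathcal{P}^-(D^2\overline u(x))+c_\varepsilon^v(x)\;\ge\;\bigl(f(x)-C_0\bigr)+c_\varepsilon^v(x)\;\ge\;f(x),
\]
since $c_\varepsilon^v(x)\ge -C_0$; thus $\overline u$ is a strong supersolution of \eqref{eq_dir1}, and being in $W^{2,p}_{\rm loc}$ with $p>p_0$ it is also an $L^p$-viscosity supersolution (by the standard equivalence, e.g.\ \cite{CCKS1996,CKLS1999}). Symmetrically, $G_\varepsilon^v(x,D^2\underline u(x))\le \mathcal{P}^+(D^2\underline u(x))+c_\varepsilon^v(x)\le f(x)+C_0+c_\varepsilon^v(x)$... wait, that direction needs $c_\varepsilon^v(x)\le C_0$ and a sign flip, so more carefully: $\underline u$ should solve $\mathcal P^+(D^2\underline u)=f+C_0$ gives $G_\varepsilon^v(x,D^2\underline u)\le \mathcal P^+(D^2\underline u)+c_\varepsilon^v(x) \le f+C_0+c_\varepsilon^v(x)$, which is not $\le f$; instead take $\underline u$ solving $\mathcal P^+(D^2\underline u)=f-C_0$, then $G_\varepsilon^v(x,D^2\underline u)\le (f-C_0)+c_\varepsilon^v(x)\le f$, so $\underline u$ is a subsolution. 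I would present the clean choice in the writeup: $\overline u$ solves $\mathcal P^-(D^2\overline u)=f-C_0$ and $\underline u$ solves $\mathcal P^+(D^2\underline u)=f-C_0$, both with boundary data $g$.

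Finally, to get the ordering $\underline u\le \overline u$ on $\overline\Omega$ — which, although not explicitly demanded in the statement as written, is what makes the set $B$ in \eqref{eq_setB} nonempty and is implicitly needed — I would note $\underline u=\overline u=g$ on $\partial\Omega$ and $\mathcal P^+(D^2(\underline u-\overline u))\ge \mathcal P^+(D^2\underline u)-\mathcal P^-(D^2\overline u)\ge 0$ in the viscosity sense... actually the cleanest route is the comparison principle for $\mathcal P^-$: $\overline u$ is a supersolution of $\mathcal P^-(D^2 w)=f-C_0$ and $\underline u$ is a subsolution of the same equation (since $\mathcal P^-\le\mathcal P^+$ forces $\mathcal P^-(D^2\underline u)\le f-C_0$), with equal boundary data, so the ABP/comparison principle for $L^p$-viscosity solutions (valid precisely because $p>p_0$, cf.\ \cite{CCKS1996}) yields $\underline u\le\overline u$ in $\Omega$. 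The main obstacle — or rather the main point requiring care — is the \emph{up-to-the-boundary continuity} of the extremal-operator solutions: this is exactly why Assumption (A\ref{assump_domain}) is imposed, and I would cite the construction of exterior-cone barriers (à la Miller) to produce a continuous sub/supersolution near each boundary point, which combined with interior $W^{2,p}$ estimates and the comparison principle gives $\overline u,\underline u\in C(\overline\Omega)$ with the prescribed boundary values; everything else is a routine application of ellipticity and the known existence/regularity theory for Pucci equations with $L^p$ data.
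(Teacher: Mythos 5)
Your overall route is the paper's own: take $\overline u$ and $\underline u$ to be strong solutions of Dirichlet problems for the Pucci extremal operators with right-hand side shifted by $C_0:=|F_1(0)|+|F_2(0)|$, compare $G_\varepsilon^v$ with $\mathcal{P}^{\pm}(\cdot)+G_\varepsilon^v(x,0)$ via Assumption (A\ref{assump_ellipticity}) applied at $M$ versus $0$, and use the $W^{2,p}_{\rm loc}(\Omega)\cap C(\overline\Omega)$ solvability of the extremal problems under the exterior cone condition (the paper cites \cite[Corollary 3.10]{CCKS1996}); independence of $v$ and $\varepsilon$ and the passage from strong to $L^p$-viscosity sub/supersolutions are as in the paper. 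The genuine problem is the sign of the shift in your supersolution. You finally fix $\overline u$ as the solution of $\mathcal{P}^-(D^2\overline u)=f-C_0$ and argue $(f-C_0)+c_\varepsilon^v(x)\ge f$ ``since $c_\varepsilon^v(x)\ge -C_0$''; but that chain requires $c_\varepsilon^v(x)\ge C_0$, whereas only $|c_\varepsilon^v(x)|\le C_0$ is true (and $c_\varepsilon^v\ge C_0$ can hold for all $h_\varepsilon^v\in[0,1]$ only when $F_1(0)=F_2(0)=0$). The failure is not merely in the estimate: take $F_1(M)=F_2(M)=-\tr(M)-1$, so $\lambda=\Lambda=1$, $C_0=2$, $c_\varepsilon^v\equiv-1$; your $\overline u$ then satisfies $-\Delta\overline u=f-2$, hence $G_\varepsilon^v(x,D^2\overline u)=f-3<f$ a.e., i.e.\ it is a strict subsolution, not a supersolution. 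The correct choice, which is what the paper makes, is $\mathcal{P}^-(D^2\overline u)=f+C_0$, giving $G_\varepsilon^v(x,D^2\overline u)\ge \mathcal{P}^-(D^2\overline u)+G_\varepsilon^v(x,0)\ge f+C_0-C_0=f$ a.e. It is telling that you caught precisely this kind of slip for the subsolution mid-argument (your final $\mathcal{P}^+(D^2\underline u)=f-C_0$ agrees with the paper and the verification $G_\varepsilon^v(x,D^2\underline u)\le f-C_0+c_\varepsilon^v\le f$ is correct), but carried the uncorrected sign into your ``clean choice'' for $\overline u$.

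With that correction the rest of your proposal stands and matches the paper's proof. Two minor remarks: the ordering $\underline u\le\overline u$ is not part of the lemma's statement (though it is indeed what makes $B$ in \eqref{eq_setB} nonempty); with the corrected right-hand sides you can get it either by comparison for $\mathcal{P}^-(\cdot)=f+C_0$, since $\mathcal{P}^-(D^2\underline u)\le\mathcal{P}^+(D^2\underline u)=f-C_0\le f+C_0$, or simply from the comparison principle for \eqref{eq_dir1} established in Proposition \ref{prop_existenceepv}, since $\underline u$ and $\overline u$ are sub/supersolutions of that problem.
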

\begin{proof}
Functions $\underline u$ and $\overline u$ are easily constructed; see \cite{CKLS1999}. We take $\overline u\in W^{2,p}_{\rm{loc}}(\Omega)\cap C(\overline\Omega)$ to be the unique
strong solution to
\begin{equation}\label{eq:b1}
	\begin{cases}
		{\mathcal P}^-(D^2\overline u)=f(x)+|F_1(0)|+|F_2(0)|&\hspace{.4in}\mbox{in}\hspace{.1in}\Omega\\
		\overline u=g&\hspace{.4in}\mbox{on}\hspace{.1in}\partial\Omega
	\end{cases}
\end{equation}
(see \cite[Corollary 3.10]{CCKS1996}). It then follows from (A\ref{assump_ellipticity}) that for a.e. $x\in\Omega$
\[
\begin{split}
G_\varepsilon^v(x,D^2\overline u(x))&=G_\varepsilon^v(x,D^2\overline u(x))-G_\varepsilon^v(x,0)+G_\varepsilon^v(x,0)
\\
&
\geq
{\mathcal P}^-(D^2\overline u(x))+G_\varepsilon^v(x,0)\geq f(x).
\end{split}
\]
Similarly, we check that if $\underline u\in W^{2,p}_{\rm{loc}}(\Omega)\cap C(\overline\Omega)$ is the unique
strong solution to
\begin{equation}\label{eq:b1}
	\begin{cases}
		{\mathcal P}^+(D^2\underline u)=f(x)-|F_1(0)|-|F_2(0)|&\hspace{.4in}\mbox{in}\hspace{.1in}\Omega\\
		\overline u=g&\hspace{.4in}\mbox{on}\hspace{.1in}\partial\Omega,
	\end{cases}
\end{equation}
then $G_\varepsilon^v(x,D^2\underline u(x))\leq f(x)$ for a.e. $x\in\Omega$. It is then straightforward to verify that $\underline{u}$ and $\overline{u}$ are also $L^p$-viscosity sub and supersolutions \cite[Lemma 2.5]{CCKS1996} and to complete the proof.
\end{proof}

\begin{Proposition}\label{prop_existenceepv}
Let Assumptions \emph{(A\ref{assump_domain})}, \emph{(A\ref{assump_ellipticity})} hold, $f\in L^p(\Omega)$, $p>p_0$ and $g\in C(\partial\Omega)$. Let $v\in C(\overline \Omega)$ be such that $v=g$ on $\partial\Omega$ and let $\varepsilon>0$. Let $G_\varepsilon^v$ be defined as in \eqref{eq_eq1}. Then there exists a unique $L^p$-viscosity solution $u_\varepsilon^v\in C(\overline\Omega)$ to \eqref{eq_dir1}. The solution $u_\varepsilon^v$ satisfies
\begin{equation}\label{eq:b5}
\underline u\leq u_\varepsilon^v\leq \overline u.
\end{equation}
Finally, for every $\Omega'\Subset \Omega$, 
\begin{equation}\label{eq:b6}
	\left\|u_\varepsilon^v\right\|_{C^{\alpha}(\Omega')}\leq C\left(\left\|g\right\|_{L^\infty(\Omega)}+\left\|f\right\|_{L^p(\Omega)}+|F_1(0)|+|F_2(0)|\right).
\end{equation}
for some universal $\alpha>0$ and $C=C(d,\lambda,\Lambda,p,{\rm diam}(\Omega),{\rm dist}(\Omega',\partial\Omega))$, which does not depend on 
$\varepsilon$ and $v$.
\end{Proposition}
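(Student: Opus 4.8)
The plan is to establish existence and uniqueness for \eqref{eq_dir1} by invoking the now-standard theory of $L^p$-viscosity solutions for equations of the form $G(x,D^2u)=f$ under the structure condition (SC) of \cite{CCKS1996}, and then to supply the interior H\"older estimate \eqref{eq:b6} via known regularity results together with the ABP bound. First I would record that, by Lemma \ref{lem_gepv}, the operator $G_\varepsilon^v(x,M)$ is $(\lambda,\Lambda)$-elliptic and satisfies the continuity estimate \eqref{eq:a1}, namely $|G_\varepsilon^v(x,M)-G_\varepsilon^v(y,M)|\leq K_\varepsilon^v|x-y|(1+\|M\|)$; since there is no dependence on lower-order terms and $G_\varepsilon^v(x,0)=h_\varepsilon^v(x)F_1(0)+(1-h_\varepsilon^v(x))F_2(0)\in L^\infty(\Omega)\subset L^p(\Omega)$, the operator falls squarely within the framework of \cite[Condition (SC)]{CCKS1996} and the existence/comparison results of \cite{CKLS1999, S1997, JS}. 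Concretely, I would cite the existence theorem for the Dirichlet problem there: since $\Omega$ satisfies a uniform exterior cone condition (Assumption (A\ref{assump_domain})), every boundary point is regular in the sense required for barriers, so there is an $L^p$-viscosity solution $u_\varepsilon^v\in C(\overline\Omega)$ with $u_\varepsilon^v=g$ on $\partial\Omega$; uniqueness follows from the comparison principle for $L^p$-viscosity solutions of proper, (SC)-type equations (again \cite{CCKS1996,CKLS1999}).

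Second, I would derive the ordering \eqref{eq:b5}. By Lemma \ref{lem:barriers}, $\overline u$ is an $L^p$-viscosity supersolution and $\underline u$ an $L^p$-viscosity subsolution of \eqref{eq_dir1}, both equal to $g$ on $\partial\Omega$; since $u_\varepsilon^v$ is the $L^p$-viscosity solution with the same boundary data, the comparison principle yields $\underline u\leq u_\varepsilon^v\leq\overline u$ in $\overline\Omega$. This also gives a uniform $L^\infty$ bound: combining with the ABP estimate (valid for $p>p_0$) applied to $\overline u$ and $\underline u$, one gets $\|u_\varepsilon^v\|_{L^\infty(\Omega)}\leq \|g\|_{L^\infty(\partial\Omega)}+C(\|f\|_{L^p(\Omega)}+|F_1(0)|+|F_2(0)|)$ with $C$ depending only on $d,\lambda,\Lambda,p,\mathrm{diam}(\Omega)$, and crucially \emph{not} on $\varepsilon$ or $v$ because the barriers of Lemma \ref{lem:barriers} do not depend on them.

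Third, for the interior estimate \eqref{eq:b6}, the key observation is that although the modulus of continuity in \eqref{eq:a1} has a constant $K_\varepsilon^v$ that is \emph{not} uniform in $\varepsilon$ and $v$, the relevant interior H\"older regularity for $L^p$-viscosity solutions does not depend on that modulus: the Caffarelli–Escauriaza $C^\alpha$ estimate for $L^p$-viscosity solutions of $\mathcal P^-(D^2u)\leq f\leq \mathcal P^+(D^2u)$ (see \cite{CCKS1996, Escauriaza1993, Crandall-Swiech2003}) applies, because $u_\varepsilon^v$, being an $L^p$-viscosity solution of $G_\varepsilon^v(x,D^2u)=f$ with $G_\varepsilon^v(x,0)\in L^\infty$, satisfies $\mathcal P^-(D^2u_\varepsilon^v)\leq f(x)-G_\varepsilon^v(x,0)\leq \mathcal P^+(D^2u_\varepsilon^v)$ in the $L^p$-viscosity sense, and $|f-G_\varepsilon^v(x,0)|\leq |f|+|F_1(0)|+|F_2(0)|$ in $L^p$. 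Hence for $\Omega'\Subset\Omega$,
\[
\|u_\varepsilon^v\|_{C^\alpha(\Omega')}\leq C\left(\|u_\varepsilon^v\|_{L^\infty(\Omega)}+\|f\|_{L^p(\Omega)}+|F_1(0)|+|F_2(0)|\right),
\]
with $\alpha$ and $C$ depending only on $d,\lambda,\Lambda,p,\mathrm{dist}(\Omega',\partial\Omega)$; substituting the $L^\infty$ bound from Step 2 gives \eqref{eq:b6} with the stated dependence, uniform in $\varepsilon$ and $v$.

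The main obstacle — more a point requiring care than a genuine difficulty — is precisely the uniformity in $\varepsilon$ and $v$: one must be sure that neither the $C^\alpha$ exponent nor its constant inherits the $\varepsilon$- and $v$-dependent modulus $K_\varepsilon^v$ from \eqref{eq:a1}. This is handled by the remark above, that interior Krylov–Safonov/Caffarelli-type estimates for $L^p$-viscosity solutions trapped between the Pucci extremal inequalities are genuinely "universal'' and see the operator only through its ellipticity constants and the $L^p$ norm of the (bounded) zero-order data $f-G_\varepsilon^v(\cdot,0)$. A secondary technical point is continuity of $u_\varepsilon^v$ up to $\partial\Omega$; here the uniform exterior cone condition supplies the standard power barriers $w(x)=A|x-x_0|^\gamma$ for $\mathcal P^-$, and since $G_\varepsilon^v$ is $(\lambda,\Lambda)$-elliptic these are sub/supersolutions for $G_\varepsilon^v$ as well, yielding a modulus of continuity at the boundary depending only on the cone parameters $r,\theta$ and on $\|f\|_{L^p}, |F_i(0)|, \|g\|_{L^\infty}$ — again uniform in $\varepsilon$ and $v$ — which also justifies $u_\varepsilon^v\in C(\overline\Omega)$ and closes the proof.
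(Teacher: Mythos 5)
Your overall architecture coincides with the paper's: existence by reducing \eqref{eq_dir1} to the standard $L^p$-viscosity theory for the $v$-frozen, uniformly elliptic operator $G^v_\varepsilon$, uniqueness by comparison, the trapping \eqref{eq:b5} from Lemma \ref{lem:barriers}, and the uniform interior H\"older bound \eqref{eq:b6} from Krylov--Safonov/Caffarelli-type estimates that see the operator only through $\lambda,\Lambda$ and the $L^p$-norm of $f-G^v_\varepsilon(\cdot,0)$; this last uniformity observation is exactly the point the paper relies on as well.

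The one step where your justification does not deliver what is claimed is uniqueness. Under \cite[Condition (SC)]{CCKS1996} alone (measurable dependence on $x$), a comparison principle between two merely continuous $L^p$-viscosity sub- and supersolutions is not available; the comparison results in \cite{CCKS1996} require one of the two functions to be a strong ($W^{2,p}_{\rm loc}$) sub- or supersolution. What makes comparison true here is precisely the Lipschitz-in-$x$ bound \eqref{eq:a1}, and the paper converts it into a comparison principle by an explicit approximation argument: take $f_n\in C(\overline\Omega)$ with $f_n\to f$ in $L^p(\Omega)$, correct the subsolution and the supersolution by the strong solutions of ${\mathcal P}^{\mp}(D^2\cdot)=f-f_n$ (which tend to $0$ uniformly by ABP), apply the classical comparison principle of \cite{Ishii-Lions1990} to the continuous-data problem \eqref{eq:b3} --- this is where \eqref{eq:a1} enters --- and then pass to the limit. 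Your proof should either reproduce this argument or cite a comparison result explicitly covering operators continuous in $x$ with $L^p$ right-hand side; the blanket appeal to ``(SC)-type'' comparison is not enough. A second, minor, slip: the power barriers $A|x-x_0|^\gamma$ you invoke at the boundary are not super/subsolutions once the right-hand side is only in $L^p$; this is harmless, however, since continuity up to $\partial\Omega$ and attainment of $g$ follow at once from \eqref{eq:b5} together with $\underline u,\overline u\in C(\overline\Omega)$ and $\underline u=\overline u=g$ on $\partial\Omega$, or from the existence theorems you already cite, which give $u^v_\varepsilon\in C(\overline\Omega)$ under the exterior cone condition.
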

\begin{proof}
We first argue that comparison principle holds for \eqref{eq_dir1}. Let $u\in C(\overline\Omega)$ be an $L^p$-viscosity subsolution of \eqref{eq_dir1} and 
$v\in C(\overline\Omega)$ be an $L^p$-viscosity supersolution of \eqref{eq_dir1}. Let $f_n\in C(\overline\Omega)$ be functions such that 
$\|f-f_n\|_{L^p(\Omega)}\to 0$ as $n\to\infty$. Let $\psi_n\in W^{2,p}_{\rm{loc}}(\Omega)\cap C(\overline\Omega)$ be the strong solution to
\[
	\begin{cases}
		{\mathcal P}^-(D^2\psi_n)=f(x)-f_n(x)&\hspace{.4in}\mbox{in}\hspace{.1in}\Omega\\
		\psi=0&\hspace{.4in}\mbox{on}\hspace{.1in}\partial\Omega.
	\end{cases}
\]
We have 
\begin{equation}\label{eq:b2}
\|\psi_n\|_{L^\infty(\Omega)}\leq C\|f-f_n\|_{L^p(\Omega)}\to 0\quad\mbox{as}\,\,n\to\infty.
\end{equation}
Also, it is easy to see that the functions $u_n=u-\psi_n$ are $L^p$-viscosity subsolutions of
\begin{equation}\label{eq:b3}
	\begin{cases}
		G_\varepsilon^v(x,D^2w)=f_n(x)&\hspace{.4in}\mbox{in}\hspace{.1in}\Omega\\
		w=g&\hspace{.4in}\mbox{on}\hspace{.1in}\partial\Omega.
	\end{cases}
\end{equation}
Similarly, let $\varphi_n\in W^{2,p}_{\rm{loc}}(\Omega)\cap C(\overline\Omega)$ be the strong solution to
\[
	\begin{cases}
		{\mathcal P}^+(D^2\varphi_n)=f(x)-f_n(x)&\hspace{.4in}\mbox{in}\hspace{.1in}\Omega\\
		\varphi=0&\hspace{.4in}\mbox{on}\hspace{.1in}\partial\Omega.
	\end{cases}
\]
Then
\begin{equation}\label{eq:b4}
\|\varphi_n\|_{L^\infty(\Omega)}\leq C\|f-f_n\|_{L^p(\Omega)}\to 0\quad\mbox{as}\,\,n\to\infty,
\end{equation}
and the functions $v_n=u-\varphi_n$ are $L^p$-viscosity supersolutions of \eqref{eq:b3}.
It is well known -- see for instance \cite[Theorem III.1-(1)]{Ishii-Lions1990}, together with Section V.1 there -- that \eqref{eq:b3} satisfies comparison principle; so we have
$u_n\leq v_n$ on $\overline\Omega$. Together with \eqref{eq:b2} and \eqref{eq:b4}, this fact implies $u\leq v$ on $\overline\Omega$. 
Thus there exists a
unique $L^p$-viscosity solution $u_\varepsilon^v\in C(\overline\Omega)$ to \eqref{eq_dir1}. Obviously \eqref{eq:b5} is satisfied
and \eqref{eq:b6} follows from standard interior regularity results for $L^p$-viscosity solutions.

The existence of an $L^p$-viscosity solution to \eqref{eq_dir1} is standard; see \cite{CKLS1999, JS, S1997}.
It is obtained by first producing standard viscosity solutions
$u_{\varepsilon,n}^v\in C(\overline\Omega)$ to \eqref{eq:b3} by Perron's method. 

Since, by the Aleksandrov-Bakelman-Pucci maximum principle for $L^p$-viscosity solutions (e.g. \cite[Proposition 3.3]{CCKS1996} or \cite[Lemma 1.4]{S1997}), 
\[
\|u_{\varepsilon,n}^v-u_{\varepsilon,m}^v\|_{L^\infty(\Omega)}\leq C\|f_n-f_m\|_{L^p(\Omega)}\to 0\quad\mbox{as}\,\,n,m\to\infty,
\]
the sequence $(u_{\varepsilon,n}^v)_{n=1}^\infty$ converges uniformly on $\overline\Omega$ to a function $u_\varepsilon^v\in C(\overline\Omega)$, satisfying $u_\varepsilon^v=g$ on $\partial\Omega$. Using the stability property of $L^p$-viscosity solutions, as in \cite[Theorem 3.8]{CCKS1996}, we then have that $u_\varepsilon^v$ is an $L^p$-viscosity solution to \eqref{eq_dir1}.
\end{proof}

In the sequel we examine the map $T:B\to C(\overline{\Omega})$.

\begin{Proposition}\label{prop_mapT}
Let $B\subset C(\overline \Omega)$ be defined as in \eqref{eq_setB} and $T:B\to C(\overline\Omega)$ be defined as in \eqref{eq_defmapT}. Then:
\begin{enumerate}
\item[{\rm (a)}] $B$ is a closed and convex subset of $C(\overline\Omega)$.
\item[{\rm (b)}] $T(B)\subset B$ and $T(B)$ is a precompact subset in $C(\overline\Omega)$.
\item[{\rm (c)}] The map $T:B\to B$ is continuous. 
\end{enumerate}
\end{Proposition}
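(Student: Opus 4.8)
The plan is to establish the three items in turn; (a) is elementary, while the substance of the proposition lies in the compactness estimate of (b) and the stability argument of (c). \emph{Part (a).} Convexity is immediate: for $v_0,v_1\in B$ and $t\in[0,1]$ one has $\underline u=t\underline u+(1-t)\underline u\le tv_1+(1-t)v_0\le t\overline u+(1-t)\overline u=\overline u$. Closedness holds because the pointwise inequalities $\underline u\le v\le\overline u$ pass to uniform limits. Observe also that every $v\in B$ satisfies $v=g$ on $\partial\Omega$, since $\underline u=\overline u=g$ there; hence $Tv=u_\varepsilon^v$ is well defined by Proposition \ref{prop_existenceepv}.

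\emph{Part (b).} The inclusion $T(B)\subset B$ is exactly \eqref{eq:b5}. For precompactness I will apply the Arzel\`a--Ascoli theorem on $C(\overline\Omega)$. Uniform boundedness of $\{u_\varepsilon^v:v\in B\}$ follows from $\underline u\le u_\varepsilon^v\le\overline u$ together with $\underline u,\overline u\in C(\overline\Omega)$. Equicontinuity, uniform in $v\in B$ and $\varepsilon>0$, results from combining two facts: (i) the interior estimate \eqref{eq:b6} yields, for each $\Omega'\Subset\Omega$, a uniform $C^\alpha$ bound and hence a uniform interior modulus of continuity; (ii) a boundary modulus is furnished by the squeeze $\underline u\le u_\varepsilon^v\le\overline u$, since $u_\varepsilon^v(x_0)=g(x_0)=\underline u(x_0)=\overline u(x_0)$ for $x_0\in\partial\Omega$ and $\underline u,\overline u$ share a modulus of continuity $\omega$ on $\overline\Omega$, whence $|u_\varepsilon^v(x)-g(x_0)|\le\omega(|x-x_0|)$ for all $x\in\overline\Omega$ and $x_0\in\partial\Omega$. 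A routine patching argument --- treating separately pairs of points that are both interior, where (i) applies, and pairs with a point near $\partial\Omega$, where (ii) applies at the nearest boundary point --- converts (i) and (ii) into a single modulus of continuity on $\overline\Omega$, uniform over $B$ and $\varepsilon$. Arzel\`a--Ascoli then gives that $T(B)$ is precompact in $C(\overline\Omega)$.

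\emph{Part (c).} Fix $\varepsilon>0$, let $v_n\to v$ in $B$, and set $u_n:=Tv_n=u_\varepsilon^{v_n}$. By (b) the sequence $(u_n)$ is precompact in $C(\overline\Omega)$, so it suffices to prove that every uniformly convergent subsequence has limit $Tv$; the standard subsequence argument then forces $u_n\to Tv$. So let $u_{n_k}\to w$ in $C(\overline\Omega)$; clearly $w=g$ on $\partial\Omega$. To identify $w$ I will invoke the stability of $L^p$-viscosity solutions under convergence of operators (as in \cite[Theorem 3.8]{CCKS1996} and its proof, in the form that allows the operator to vary). The key convergence is $h_\varepsilon^{v_n}\to h_\varepsilon^v$ uniformly on $\Omega$: the truncation $t\mapsto\max(\min(\tfrac{t+\varepsilon}{2\varepsilon},1),0)$ is $\tfrac1{2\varepsilon}$-Lipschitz, so $\|g_\varepsilon^{v_n}-g_\varepsilon^v\|_{L^\infty}\le\tfrac1{2\varepsilon}\|v_n-v\|_{L^\infty}$, and convolution with $\eta_\varepsilon$ does not increase the sup-norm, whence $\|h_\varepsilon^{v_n}-h_\varepsilon^v\|_{L^\infty(\Omega)}\le\tfrac1{2\varepsilon}\|v_n-v\|_{L^\infty(\Omega)}\to0$. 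Consequently, for every $\phi\in W^{2,p}_{\mathrm{loc}}(\Omega)$ and every ball $B'\Subset\Omega$,
\[
\begin{aligned}
\|G_\varepsilon^{v_{n_k}}(\cdot,D^2\phi)-G_\varepsilon^v(\cdot,D^2\phi)\|_{L^p(B')}
&=\|(h_\varepsilon^{v_{n_k}}-h_\varepsilon^v)(F_1(D^2\phi)-F_2(D^2\phi))\|_{L^p(B')}\\
&\le\|h_\varepsilon^{v_{n_k}}-h_\varepsilon^v\|_{L^\infty}\,\|F_1(D^2\phi)-F_2(D^2\phi)\|_{L^p(B')}\to0,
\end{aligned}
\]
the last norm being finite since $D^2\phi\in L^p(B')$ and $F_1,F_2$ are Lipschitz. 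The stability theorem then shows that $w$ is an $L^p$-viscosity solution of $G_\varepsilon^v(x,D^2w)=f$ in $\Omega$ with $w=g$ on $\partial\Omega$; by the uniqueness part of Proposition \ref{prop_existenceepv}, $w=u_\varepsilon^v=Tv$, which proves continuity of $T$. I expect this stability step to be the only genuinely delicate point: one must cite (or reprove) the $L^p$-viscosity stability result in the form permitting a varying operator and verify that the convergence $h_\varepsilon^{v_n}\to h_\varepsilon^v$ feeds correctly into its hypotheses; the boundary equicontinuity in (b) requires some care too, but it is handled cleanly via the barrier squeeze of Lemma \ref{lem:barriers}.
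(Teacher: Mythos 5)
Your proof is correct and follows essentially the same route as the paper: $T(B)\subset B$ and precompactness from the barrier squeeze plus the uniform interior $C^\alpha$ estimate \eqref{eq:b6}, and continuity via a subsequence argument combining the uniform convergence $h_\varepsilon^{v_n}\to h_\varepsilon^v$, the stability of $L^p$-viscosity solutions, and the uniqueness from Proposition \ref{prop_existenceepv}. You simply spell out details the paper leaves terse (the boundary modulus via $\underline u,\overline u$ and the verification of the stability hypothesis in its $L^p$ form), which is consistent with the intended argument.
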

\begin{proof}
We notice that it is obvious from the definition of $B$ (see \eqref{eq_setB}) that $B$ is a closed and convex. 

To establish {\rm{(b)}}, we first observe that Proposition \ref{prop_existenceepv} yields $\underline u\leq Tv\leq \overline u$ so $T(B)\subset B$. 
To show that $T(B)$ is precompact in $C(\overline\Omega)$, we proceed as follows. Take a sequence $(Tv_n)_{n\in\mathbb{N}}\subset T(B)$. Proposition \ref{prop_existenceepv} ensures that $(Tv_n)_{n\in\mathbb{N}}$ is equicontinuous in $C(\overline\Omega)$. Hence, it admits a convergent subsequence in $B$, which completes the argument.

To complete the proof, we verify {\rm{(c)}}. Take $(v_n)_{n\in\mathbb{N}}\subset B$ and suppose $v_n\to v\in B$ in $C(\overline\Omega)$. We need to prove that $Tv_n\to Tv$ in $C(\overline\Omega)$. 

First, we claim that the sequence of operators $(G_\varepsilon^{v_n})_{n\in\mathbb{N}}$ converges locally uniformly to $G_\varepsilon^v$. In fact,
\[
	\begin{split}
	\sup_{\Omega}\left|h_\varepsilon^{v_n}(x)-h_\varepsilon^v(x)\right|
	\leq\frac{1}{2\varepsilon}\sup_{\Omega}\left|v_n(x)-v(x)\right|\to 0\quad\mbox{as}\,\,n\to\infty.
	\end{split}
\]
Hence, $h_\varepsilon^{v_n}\to h_\varepsilon^{v}$ uniformly in $\Omega$. Therefore, the definition of the operators ensures that $G_\varepsilon^{v_n}\to G_\varepsilon^v$ locally uniformly. 

Since $T(B)$ is precompact in $C(\overline\Omega)$, there exists $w\in B$ such that $Tv_n\to w$, through a subsequence 
$(Tv_{n_i})_{i\in\mathbb{N}}$ if necessary. The convergence of the sequence $(G_\varepsilon^{v_n})_{n\in\mathbb{N}}$, together with the stability of viscosity solutions, ensures that $w$ is an $L^p$-viscosity solution to
\[
	G_\varepsilon^v(x,D^2w)=f\hspace{.2in}\mbox{in}\hspace{.2in}\Omega,
\]
with $w=g$ on $\partial\Omega$. The uniqueness of $L^p$-viscosity solutions to \eqref{eq_dir1} yields $Tv=w$. 

Finally, we notice the previous argument does not depend on the subsequence $(Tv_{n_i})_{i\in\mathbb{N}}$. In fact, suppose there exists $z\in B$ such that, through a different subsequence $(Tv_{n_j})_{j\in\mathbb{N}}$, we have $Tv_{n_j}\to z$, as $j\to \infty$. As before, the uniqueness of solutions to \eqref{eq_dir1} ensures that $z=w=Tv$ and the proof is complete.
\end{proof}

We are now in a position to prove Theorem \ref{thm_existence1}.

\begin{proof}[Proof of Theorem \ref{thm_existence1}]
We split the proof into two steps. First we consider the existence of solutions to \eqref{eq_main}.

\medskip

\noindent{\bf Step 1 - }Proposition \ref{prop_mapT} ensures that we can use the Schauder Fixed Point Theorem (see for instance \cite[Corollary 11.2]{GT2001}) to obtain a fixed point $u_\varepsilon$ of $T$. Thus
$u_\varepsilon$ is an $L^p$-viscosity solution to \eqref{eq_dir1eps}.
%\[
%	G_\varepsilon^u(x,D^2u)=f\hspace{.2in}\mbox{in}\hspace{.2in}\Omega,
%\]
%with $u_=g$ on $\partial\Omega$. 
We now choose a convergent subsequence $u_{\varepsilon_n}$ such that $u_{\varepsilon_n}\to u$ in $C(\overline\Omega)$ as $\varepsilon_n\to 0$ for some $u$ in $C(\overline\Omega)$. Since $G_{\varepsilon_n}^{u_{\varepsilon_n}}$ converges locally uniformly on $(\left(\Omega^+(u)\cup\Omega^-(u)\right)\cap \Omega)\times S(d)$ to $G^u$ as 
$\varepsilon_n\to 0$,
where $G^u(x,M)=F_1(M)\chi_{\{u(x)>0\}}+F_2(M)\chi_{\{u(x)<0\}}$, it is easy to see that $u$ is an $L^p$-viscosity solution to \eqref{eq_main}.

\medskip

\noindent{\bf Step 2 - }It remains to prove that, if (A\ref{assump_comp}) holds and $\tau$ is sufficiently small, $u\in C^{1,\alpha}_{\rm{loc}}(\Omega)$,
 for some $\alpha\in(0,1)$, and the estimate in \eqref{eq_c1aest} is available. We generally repeat the strategy of the proof of \cite[Theorem 8.3]{Caffarelli-Cabre1995}. We start with a few observations. We define
\[
	\hat{G}_\varepsilon^{u_{\varepsilon}}(x,M):= G_\varepsilon^{u_{\varepsilon}}(x,M)-G_\varepsilon^{u_{\varepsilon}}(x,0)\hspace{.2in}\mbox{and}\hspace{.2in}\hat{f}(x):=f(x)-G_\varepsilon^{u_{\varepsilon}}(x,0)
\]
and notice that $u_\varepsilon$ is the $L^p$-viscosity solution to 
\[
	\hat{G}_\varepsilon^{u_{\varepsilon}}(x,D^2u_\varepsilon)=\hat{f}(x)
\]
in $\Omega$, with $u_\varepsilon=g$ on $\partial\Omega$. We note that the Aleksandrov-Bakelman-Pucci maximum principle for $L^p$-viscosity solutions yields
\[
\|u_\varepsilon\|_{L^\infty(\Omega)}\leq C(d,p,\lambda,\Lambda,{\rm diam}(\Omega))(|F_1(0)|+|F_2(0)|+\|f\|_{L^p(\Omega)}+\|g\|_{L^\infty(\partial\Omega)}).
\]

For every $0<r<1$, the set $\Omega'$ can be covered by a finite number of open balls $B_{\frac{r}{2}}(x_i), i=1,...,m$, for some $x_i\in\Omega'$ and 
such that
$\overline B_{r}(x_i)\subset\Omega$. Thus it is enough to prove the result for $\Omega'=B_{\frac{r}{2}}(x_i)$ for one of such balls. 
To simplify notation we will assume that $x_i=0\in\Omega'$.

Now we introduce a scaling. Set 
\[
	\tilde{u}_\varepsilon(x):=\frac{1}{N}u_\varepsilon(rx),
\]
where $0<r\ll1$ will be determined later and 
\[
	N:=1+\|\hat f\|_{L^p(\Omega)}+\left\|u_\varepsilon\right\|_{L^\infty(\Omega)}.
\]
We observe that $\tilde{u}_\varepsilon^v$ is the unique $L^p$-viscosity solution to 
\[
		\tilde{G}_\varepsilon^{u_{\varepsilon}}(x,D^2\tilde{u}_\varepsilon)=\tilde{f}(x)\hspace{.2in}\mbox{in}\hspace{.2in}B_1,
\]
where
\begin{equation}\label{eq:Gtilde}
	\tilde{G}_\varepsilon^{u_{\varepsilon}}(x,M):=\frac{r^2}{N}\hat{G}_\varepsilon^{u_{\varepsilon}}\left(rx,\frac{N}{r^2}M\right)\hspace{.2in}\mbox{and}\hspace{.2in}\tilde{f}(x):=\frac{r^2}{N}\hat{f}(rx).
\end{equation}
We have $\left\|\tilde{u}_\varepsilon\right\|_{L^\infty(B_1)}\leq 1$ and $\|\tilde{f}\|_{L^p(B_1)}\leq 1$. Also, by choosing $0<r\ll1$, we can make $\|\tilde{f}\|_{L^p(B_1)}$ arbitrarily small. 
%Now, set $\hat{F}_i(M):=F_i(M)-F_i(0)$, for $i=1,2$, and consider the scaling
%\[
%	\tilde{F}_i(M):=\frac{r^2}{N}\hat{F}_i\left(\frac{N}{r^2}M\right). 
%\]
%It is clear that $\tilde{F}_i$ has the same ellipticity constants as $F_i$. Therefore, $L^p$-viscosity solutions to $\tilde{F}_i(D^2w)=0$ are locally of class 
%$C^{1,\alpha_0}$, with 
%\[
%	\left\|w\right\|_{C^{1,\alpha_0}(B_{1/2}}\leq K\left\|w\right\|_{L^\infty(\partial B_1)},
%\]
%for some universal constant $K>0$. 
Finally, we define $\beta:\Omega\times\Omega\to\mathbb{R}$ as
\[
	\beta^\varepsilon(x,x_0):=\sup_{M\in S(d)}\frac{|\tilde{G}_\varepsilon^{u_{\varepsilon}}(x,M)-\tilde{G}_\varepsilon^{u_{\varepsilon}}(y,M)|}{1+\|M\|}.
\]
By \eqref{eq:a21} and the definition of $\tilde{G}_\varepsilon^{u_{\varepsilon}}$, we have 
\[
	\begin{split}
		\frac{|\tilde{G}_\varepsilon^{u_{\varepsilon}}(x,M)-\tilde{G}_\varepsilon^{u_{\varepsilon}}(y,M)|}{1+\|M\|}
			&\leq \frac{2\frac{r^2}{N}\left(2K+\tau\frac{N}{r^2}\|M\|\right)}{1+\|M\|}\\
			&\leq \frac{2\tau\left(\frac{2Kr^2}{N\tau}+\|M\|\right)}{1+\|M\|}\\
			&\leq 2\tau,
	\end{split}
\]
provided we chose $K$ and $r$ such that $2Kr^2\leq N\tau$. Thus
\[
	\sup_{x,x_0\in\Omega}\beta^\varepsilon(x,x_0)\leq 2\tau.
\]
At this point we can use \cite[Theorem 8.3]{Caffarelli-Cabre1995} and \cite[Theorem 2.1]{S1997} to claim that there exists $0<\beta_0=\beta_0(d,p,\lambda,\Lambda,\alpha)\ll 1$ such that, if $0<\tau\leq \beta_0$, then $u_\varepsilon\in 
C^{1,\alpha}_{\rm{loc}}(\Omega)$ for $\alpha$ as in the statement of Theorem \ref{thm_existence1} and estimate \eqref{eq_c1aest} holds for $u_\varepsilon$. Since estimate \eqref{eq_c1aest} for $u_\varepsilon$ does not depend on $\varepsilon$, we then conclude that $u\in C^{1,\alpha}_{\rm{loc}}(\Omega)$ and it satisfies \eqref{eq_c1aest}.
\end{proof}

\section{Existence of strong solutions}\label{sec_proofthm2}

In this section we work under (A\ref{assump_convexity}) and establish the existence of strong solutions to \eqref{eq_main}. The main observation allowing us to prove this result is that in this case, the operator $G_\varepsilon^v$ satisfies the closeness property \eqref{eq:a2}. The latter allows us to frame \eqref{eq_dir1} in the context of the $W^{2,p}$-regularity theory \cite{Caffarelli1989,Caffarelli-Cabre1995,Escauriaza1993}.

\begin{Proposition}[Sobolev regularity for $u_\varepsilon$]\label{prop_reg2}
Let Assumptions \emph{(A\ref{assump_domain})}, \emph{(A\ref{assump_ellipticity})}, \emph{(A\ref{assump_convexity})} hold, $g\in C(\partial\Omega)$ and $f\in L^p(\Omega)$ for some $p>p_0$. Let $u_\varepsilon$ be an $L^p$-viscosity solution to \eqref{eq_dir1eps}. Then there exists $\beta_0=\beta_0(n,p,\lambda,\Lambda)>0$ such that if $\sigma\leq \beta_0$, then $u_\varepsilon\in W^{2,p}_{\rm{loc}}(\Omega)$ and it is a strong solution to \eqref{eq_dir1eps}. Moreover, for every $\Omega'\Subset \Omega$ there exists $C>0$ satisfying
\[
	\left\|u_\varepsilon\right\|_{W^{2,p}(\Omega')}\leq C\left(1+|F_1(0)|+|F_2(0)|+\|f\|_{L^p(\Omega)}+\|g\|_{L^\infty(\partial\Omega)}\right),
\]
where $C=C(d,p,\lambda,\Lambda,L,\sigma,{\rm dist}(\Omega',\partial\Omega),{\rm diam}(\Omega))$.
\end{Proposition}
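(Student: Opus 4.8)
The plan is to mimic Step 2 of the proof of Theorem \ref{thm_existence1}, with the closeness estimate \eqref{eq:a21} (comparison with $F_1$) replaced by \eqref{eq:a2} (comparison with the convex operator $F$), and the $C^{1,\alpha}$-estimates of \cite{Caffarelli-Cabre1995,S1997} replaced by the interior $W^{2,p}$-estimate for $L^p$-viscosity solutions of uniformly elliptic equations whose operator has small oscillation, in the $L^d$-averaged sense, with respect to a convex $(\lambda,\Lambda)$-elliptic operator; see \cite{Caffarelli1989,Caffarelli-Cabre1995,Escauriaza1993}. The convexity of $F$ is used only through the fact that the homogeneous equation $F(D^2w)=0$ has interior $C^{2,\alpha}$-estimates (Evans--Krylov theorem), which is the ingredient on which those $W^{2,p}$-estimates rest; this is why the resulting threshold $\beta_0$ will depend only on $d,p,\lambda,\Lambda$.

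First I would normalize the problem. Setting $\hat G_\varepsilon^{u_\varepsilon}(x,M):=G_\varepsilon^{u_\varepsilon}(x,M)-G_\varepsilon^{u_\varepsilon}(x,0)$, $\hat F(M):=F(M)-F(0)$ and $\hat f(x):=f(x)-G_\varepsilon^{u_\varepsilon}(x,0)$, the function $u_\varepsilon$ is the $L^p$-viscosity solution of $\hat G_\varepsilon^{u_\varepsilon}(x,D^2u_\varepsilon)=\hat f$ in $\Omega$ with $u_\varepsilon=g$ on $\partial\Omega$, where $\hat G_\varepsilon^{u_\varepsilon}(x,0)=\hat F(0)=0$, $\hat F$ is still convex and $(\lambda,\Lambda)$-elliptic, and, by \eqref{eq:a2}, $|\hat G_\varepsilon^{u_\varepsilon}(x,M)-\hat F(M)|\le 2L+\sigma\|M\|$ for all $x$ and $M$. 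Since $G_\varepsilon^{u_\varepsilon}(x,0)=h_\varepsilon^{u_\varepsilon}(x)F_1(0)+(1-h_\varepsilon^{u_\varepsilon}(x))F_2(0)$ is a convex combination of $F_1(0)$ and $F_2(0)$, it is bounded by $|F_1(0)|+|F_2(0)|$, so $\|\hat f\|_{L^p(\Omega)}$ is controlled by $\|f\|_{L^p(\Omega)}$, $|F_1(0)|$, $|F_2(0)|$ and $\mathrm{diam}(\Omega)$; and the Aleksandrov--Bakelman--Pucci maximum principle for $L^p$-viscosity solutions bounds $\|u_\varepsilon\|_{L^\infty(\Omega)}$ by the right-hand side of the claimed estimate, uniformly in $\varepsilon$, exactly as in Theorem \ref{thm_existence1}.

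Next I would localize and rescale. Covering $\Omega'$ by finitely many balls $B_{r/2}(x_i)$ with $\overline B_r(x_i)\subset\Omega$ and $0<r<1$ to be chosen (so $r$ must be smaller than $\mathrm{dist}(\Omega',\partial\Omega)$), it suffices to treat one such ball, centered after translation at the origin. With $N:=1+\|\hat f\|_{L^p(\Omega)}+\|u_\varepsilon\|_{L^\infty(\Omega)}$ and $\tilde u_\varepsilon(x):=N^{-1}u_\varepsilon(rx)$, the rescaled function solves $\tilde G_\varepsilon^{u_\varepsilon}(x,D^2\tilde u_\varepsilon)=\tilde f$ in $B_1$, where $\tilde G_\varepsilon^{u_\varepsilon}(x,M):=\frac{r^2}{N}\hat G_\varepsilon^{u_\varepsilon}(rx,\frac{N}{r^2}M)$ and $\tilde f(x):=\frac{r^2}{N}\hat f(rx)$, with $\|\tilde u_\varepsilon\|_{L^\infty(B_1)}\le1$ and $\|\tilde f\|_{L^p(B_1)}\le1$, the latter being moreover arbitrarily small for $r$ small since $p>p_0\ge d/2$ gives $2-d/p>0$. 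Comparing $\tilde G_\varepsilon^{u_\varepsilon}$ with the rescaled convex operator $\tilde F(M):=\frac{r^2}{N}\hat F(\frac{N}{r^2}M)$, which is again convex, $(\lambda,\Lambda)$-elliptic and vanishes at $M=0$, the same computation as in Step 2 gives $|\tilde G_\varepsilon^{u_\varepsilon}(x,M)-\tilde F(M)|\le\frac{2Lr^2}{N}+\sigma\|M\|$, hence $\beta^\varepsilon(x):=\sup_{M\in S(d)}\frac{|\tilde G_\varepsilon^{u_\varepsilon}(x,M)-\tilde F(M)|}{1+\|M\|}\le\sigma$ on $B_1$, provided $r$ is chosen so that $2Lr^2\le N\sigma$; in particular the $L^d$-average of $\beta^\varepsilon$ over any subball of $B_1$ is at most $\sigma$. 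This choice of $r$ depends on $L$, $\sigma$ and $\mathrm{dist}(\Omega',\partial\Omega)$, which accounts for the corresponding dependence of the final constant.

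Finally, I would invoke the interior $W^{2,p}$-estimate recalled above: there is $\beta_0=\beta_0(d,p,\lambda,\Lambda)>0$ such that, if $\sigma\le\beta_0$, then $\tilde u_\varepsilon\in W^{2,p}(B_{1/2})$ and $\|\tilde u_\varepsilon\|_{W^{2,p}(B_{1/2})}\le C(d,p,\lambda,\Lambda)(\|\tilde u_\varepsilon\|_{L^\infty(B_1)}+\|\tilde f\|_{L^p(B_1)})\le C(d,p,\lambda,\Lambda)$. Undoing the rescaling and the normalization yields $u_\varepsilon\in W^{2,p}(B_{r/2})$ with $\|u_\varepsilon\|_{W^{2,p}(B_{r/2})}\le CN$, $C=C(d,p,\lambda,\Lambda,L,\sigma,\mathrm{dist}(\Omega',\partial\Omega),\mathrm{diam}(\Omega))$; summing over the finite cover and using the bounds on $N$ and on $\|u_\varepsilon\|_{L^\infty(\Omega)}$ gives the asserted estimate on $\Omega'$, uniformly in $\varepsilon$. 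In particular $u_\varepsilon\in W^{2,p}_{\mathrm{loc}}(\Omega)$, so it satisfies \eqref{eq_dir1eps} pointwise almost everywhere, i.e. it is a strong solution. The step I expect to be the main obstacle is not a hard estimate but rather pinning down the precise statement and reference for the version of Caffarelli's $W^{2,p}$-estimate valid for $L^p$-viscosity solutions with $L^p$ right-hand side ($p>p_0$) under the small-oscillation-from-a-convex-operator hypothesis, and verifying that its threshold $\beta_0$ depends only on $d,p,\lambda,\Lambda$; the rescaling bookkeeping is routine and already essentially carried out in the proof of Theorem \ref{thm_existence1}.
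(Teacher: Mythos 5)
Your proposal follows essentially the same route as the paper: normalize and rescale exactly as in Step 2 of the proof of Theorem \ref{thm_existence1}, use \eqref{eq:a2} to bound the oscillation $\beta^\varepsilon$ of $\tilde G_\varepsilon^{u_\varepsilon}$ relative to the rescaled convex operator $\tilde F$ by $\sigma$ (with the same choice $2Lr^2\le N\sigma$), and then invoke the interior $W^{2,p}$-estimates of Caffarelli--Cabr\'e/Caffarelli/Escauriaza, whose threshold depends only on $d,p,\lambda,\Lambda$ thanks to the uniform interior $C^{1,1}$ estimates for the convex operator, before undoing the scaling, covering $\Omega'$, and concluding the strong-solution property from $W^{2,p}_{\rm{loc}}$ regularity (the paper cites \cite[Corollary 3.7]{CCKS1996} for this last point). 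The argument is correct and matches the paper's proof in structure and in all essential details.
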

\begin{proof}
The beginning of the proof is the same as Step 3 of the proof of Theorem \ref{thm_existence1} up to \eqref{eq:Gtilde}.
%For every $0<\ell<1$, the set $\Omega'$ can be covered by a finite number of open balls $B_{\frac{\ell}{2}}(x_i), i=1,...,m$, for some $x_i\in\Omega'$ and 
%such that
%$\overline B_{\ell}(x_i)\subset\Omega$. Thus it is enough to prove the result for $\Omega'=B_{\frac{\ell}{2}}(x_i)$ for one of such balls. 
%To simplify notation we will assume that $x_i=0\in\Omega'$. We observe that \eqref{eq_dir1} can be rewritten as
%\[
%\hat G_\varepsilon^v(x,D^2\tilde u_\varepsilon^v)=\hat f(x),
%\]
%where 
%\[
%\hat G_\varepsilon^v(x,M)=G_\varepsilon^v(x,M)-G_\varepsilon^v(x,0),\quad\hat f(x)=f(x)-G_\varepsilon^v(x,0).
%\]
%Also, the Aleksandrov-Bakelman-Pucci maximum principle yields
%\[
%\|u_\varepsilon^v\|_{L^\infty(\Omega)}\leq C(d,p,\lambda,\Lambda,{\rm diam}(\Omega)(|F_1(0)|+|F_2(0)|+\|f\|_{L^p(\Omega)}+\|g\|_{L^\infty(\partial\Omega)}).
%\]
%We begin with a simple rescaling. Define
%\[
%\tilde u_\varepsilon^v(x)=\frac{1}{N}u_\varepsilon^v(rx),
%\]
%where
%\[
%N=1+\|f\|_{L^p(\Omega)}+\|\|_{L^\infty(\Omega)}.
%\]
%then $\tilde u_\varepsilon^v$ is the unique $L^p$-viscosity solution to
%\[
%\tilde G_\varepsilon^v(x,D^2\tilde u_\varepsilon^v)=\tilde f(x) \quad\mbox{in}\,\, B_1,
%\]
%where
%\[
%\tilde G_\varepsilon^v(x,M)={\frac{r^2}{N}}\hat G_\varepsilon^v\left(x,{\frac{N}{r^2}}M\right), \quad \tilde f(x)={\frac{r^2}{N}}\hat f(rx).
%\]
%$\|u_\varepsilon^v\|_{L^\infty(B_1)}\leq 1$ and $\|\tilde f\|_{L^p(B_1)}$ can be made arbitrarily small by choosing $r$ small. 
It is now enough to prove that 
\begin{equation}\label{eq:c1}
\|\tilde u_\varepsilon\|_{W^{2,p}(B_{1/2})}\leq C.
\end{equation} 

We define $\hat F(M)=F(M)-F(0)$ and
\[
\tilde F(M)={\frac{r^2}{N}}\hat F\left({\frac{N}{r^2}}M\right).
\]
The operator $\tilde F$ is convex with the same interior $C^{1,1}$ estimates as $\hat F$. That is, if $w\in C^2(B_1)\cap C(\overline B_1)$ is a solution
to
\[
\tilde F(D^2w)=0\quad\mbox{in}\,\, B_1
\]
then
\[
\|w\|_{C^{1,1}(B_{1/2})}\leq k\|w\|_{L^\infty(\partial B_1)}
\]
for some absolute constant $k=k(d,\lambda,\Lambda)$, independent of ${\frac{N}{r^2}}$.

We now define the function $\beta:\Omega\to\mathbb{R}$ by
\[
	\beta^\varepsilon(x):=\sup_{M\in S(d)}\frac{\left|\tilde G_\varepsilon^{u_{\varepsilon}}(x,M)-\tilde F(M)\right|}{1+\left\|M\right\|}.
\]
Using \eqref{eq:a2} we easily compute
\[
\begin{split}
\frac{\left|\tilde G_\varepsilon^{u_{\varepsilon}}(x,M)-\tilde F(M)\right|}{1+\left\|M\right\|}
&\leq \frac{\frac{r^2}{N}(2L+\sigma \frac{N}{r^2}\|M\|)}{1+\left\|M\right\|}
\\
&
\leq \frac{\sigma(\frac{2Lr^2}{N\sigma}+\|M\|)}{1+\left\|M\right\|}\leq \sigma
\end{split}
\]
if $2Lr^2\leq N\sigma$. We can now repeat the arguments of the proof of \cite[Proposition 7.2]{Caffarelli-Cabre1995} (see also \cite{Caffarelli1989} and \cite{Escauriaza1993} for the case $p_0<p\leq n$) with some straightforward modifications to obtain that there exists $\beta_0=\beta_0(d,p,\lambda,\Lambda)>0$ such that if $\|\tilde f\|_{L^p(B_1)}\leq \beta_0$ and $\sigma\leq \beta_0$, then \eqref{eq:c1} holds (see also Remark \ref{rem:1}). The fact that $u_\varepsilon$
is a strong solution to \eqref{eq_dir1} is the consequence of \cite[Corollary 3.7]{CCKS1996}.
%IF WE ASSUME THAT BOTH $F_1,F_2$ ARE CONVEX THEN $\tilde G_\varepsilon^v(x,M)$ IS CONVEX AND THE SAME ARGUMENT WOULD GIVE
%\[
%\sup_{M\in S(d)}\frac{\left|\tilde G_\varepsilon^v(x,M)-\tilde G_\varepsilon^v(x_0,M)\right|}{1+\left\|M\right\|}\leq 2\beta
%\]
%IF $2Lr^2\leq N\beta$. THEN WE COULD CITE \cite{Caffarelli-Cabre1995} EXPLICITLY.
\end{proof}

\begin{proof}[Proof of Theorem 2] 
Since the interior $W^{2,p}$ estimates of Proposition \ref{prop_reg2} are independent of $\varepsilon$, they will be satisfied by the $L^p$-viscosity solution $u$ 
to \eqref{eq_main} obtained in Step 1 of the proof of Theorem \ref{thm_existence1}. The function $u$ is then a strong solution to \eqref{eq_main} by \cite[Corollary 3.7]{CCKS1996}.
\end{proof}

\begin{Remark}\label{rem:1}\normalfont
A careful examination of the proof of \cite[Theorem 7.1]{Caffarelli-Cabre1995} shows that interior $W^{2,p}$ estimates for $L^p$-viscosity solutions to
\begin{equation}\label{eq:generalF}
F(x, D^2u)=f(x)\quad\mbox{in}\,\,\Omega,
\end{equation}
where $F$ is $(\lambda,\Lambda)$-elliptic, $F(x,0)=0$ in $\Omega$ and $f\in L^p(\Omega), p>p_0$, holds under the following conditions. First, one assumes that for every $x_0\in\Omega$ there exists a $(\lambda,\Lambda)$-elliptic function $F_{x_0}$ such that $F_{x_0}(0)=0$ and the equation $F_{x_0}(D^2w)=0$ satisfies interior $C^{1,1}$ estimates; that is if $w_0\in C(\partial B_1)$ then equation
\[
	\begin{cases}
		F_{x_0}(D^2w)=0&\hspace{.4in}\mbox{in}\hspace{.1in}B_1\\
		w=w_0&\hspace{.4in}\mbox{on}\hspace{.1in}\partial B_1
	\end{cases}
\]
has a smooth solution $w\in C^2(B_1)\cap C(\overline B_1)$ such that
\[
\|w\|_{C^{1,1}(B_{1/2})}\leq k\|w_0\|_{L^\infty(\partial B_1)}
\]
for some absolute constant $k$. The second condition is about the oscillation functions
$\beta(\cdot,x_0):\Omega\to\mathbb{R}$,
\begin{equation}\label{eq:betadef}
	\beta(x,x_0):=\sup_{M\in S(d)}\frac{\left|F(x,M)-F_{x_0}(M)\right|}{1+\left\|M\right\|}.
\end{equation}
One takes $r_0>0$ and considers the quantity
\begin{equation}\label{eq:beta}
\beta:=\sup\left\{\left(\frac{1}{r^d}\int_{B_r(x_0)}\beta(x,x_0)^pdx\right)^{\frac{1}{p}}:0<r<r_0,x_0\in\Omega, B_r(x_0)\subset\Omega\right\}.
\end{equation}
One can then prove that there exists $\beta_0=\beta_0(d,p,\lambda,\Lambda,k)$ such that if $\beta\leq \beta_0$, then an $L^p$-viscosity solution $u$ to
$F(x,D^2u)=f(x)$ in $\Omega$ belongs to $W^{2,p}_{\rm{loc}}(\Omega)$ and for
every open subset $\Omega'\Subset \Omega$,
\[
\|u\|_{W^{2,p}(\Omega')}\leq C\left(1+\|f\|_{L^p(\Omega)}
+\|u\|_{L^\infty(\Omega)}\right),
\]
where $C=C(d,p,\lambda,\Lambda,r_0,k,{\rm diam}(\Omega),{\rm dist}(\Omega',\partial\Omega))$.

In \cite[Theorem 7.1]{Caffarelli-Cabre1995} and \cite{Caffarelli1989}, $F_{x_0}(M)$ is replaced by $F(x_0,M)$ in the definition of $\beta(x,x_0)$; also
there is $\|M\|$ in the denominator in these papers instead of $1+\left\|M\right\|$ but this only affects the final form of the estimate. To obtain the result here
one needs to notice that in the proof of the key approximation Lemma 7.9 in \cite{Caffarelli-Cabre1995}, equation $F_0(D^2h)=0$ can be used instead of
$F(0,D^2h)=0$ and rescaled versions of functions $F_{x_0}(M)$ can be used in the proof there instead of rescaled versions of $F(x_0,M)$. The changes needed 
to obtain the result in the case $p_0<p$ are explained in \cite{Escauriaza1993}.

A condition which is essentially the same as the smallness of \eqref{eq:beta} (after rescaling if necessary) has been used before to obtain $W^{2,p}$ regularity results in \cite{Krylov2020,Krylov2018,Krylov2017,Krylov2013}, even though it is formulated there in a slightly different form. Also a similar condition was used in \cite{Huang2019} to obtain $W^{2,p,\mu}$ and $W^{2,\text{BMO}}$ regularity results. A related condition using a recession function of $F$ was also used in \cite{Pimentel-Teixeira2016}; see also \cite{Silvestre-Teixeira2015}. Finally, a condition similar to the smallness of \eqref{eq:beta} was used in the context of the Isaacs equation in \cite{Pimentel2019}.

We also remark that to obtain interior $C^{1,\alpha}$ estimates for $L^p$-viscosity solutions to \eqref{eq:generalF} it is enough to assume the smallness of
$\beta$ defined in \eqref{eq:beta}, where $\beta(x,x_0)$ is defined by \eqref{eq:betadef} and where the functions $F_{x_0}$ are any $(\lambda,\Lambda)$-elliptic
operators. A condition similar to \eqref{eq:a21} was used to prove $C^{1,\alpha}$ estimates for $L^p$-viscosity solutions in \cite{Krylov2018,Krylov2018b}. 
%related condition using a recession function of $F$ was also used in (CITE Pimentel, Edgard A.; Teixeira, Eduardo V. Sharp Hessian integrability estimates for nonlinear elliptic equations: an asymptotic approach. J. Math. Pures Appl. (9) 106 (2016), no. 4, 744--767). MORE?
\end{Remark}

\bigskip

\noindent{\bf Acknowledgments} This work was partially supported by the Centre for Mathematics of the University of Coimbra - UIDB/00324/2020, funded by the Portuguese Government through FCT/MCTES. EP is partly funded by FAPERJ (E-26/200.002/2018), CNPq-Brazil (433623/2018-7, 307500/2017-9) and Instituto Serrapilheira (1811-25904). This study was financed in part by the Coordena\c{c}\~ao de Aperfei\c{c}oamento de Pessoal de N\'ivel Superior - Brasil (CAPES) - Finance Code 001.

\bigskip

\bibliography{bibliography}
\bibliographystyle{plain}

\bigskip

\noindent\textsc{Edgard A. Pimentel}\\
University of Coimbra\\
CMUC, Department of Mathematics\\ 
3001-501 Coimbra, Portugal\\
and\\
Pontifical Catholic University of Rio de Janeiro -- PUC-Rio\\
22451-900, G\'avea, Rio de Janeiro-RJ, Brazil\\
\noindent\texttt{edgard.pimentel@mat.uc.pt}

\vspace{.15in}

\noindent\textsc{Andrzej \'{S}wi\k{e}ch}\\
School of Mathematics\\
Georgia Institute of Technology\\
Atlanta GA 30332 USA\\
\noindent\texttt{swiech@math.gatech.edu}

\end{document}